\def\Z{\mathbb{Z}}
\def\Q{\mathbb{Q}}
\def\<{\langle}
\def\>{\rangle}
\newtheorem{theorem}{Theorem}[section]
\newtheorem{lemma}[theorem]{Lemma}
\newtheorem{corollary}[theorem]{Corollary}
\newtheorem{conjecture}{Conjecture}
\newtheorem{thmA}{Theorem}
\newtheorem{corA}[thmA]{Corollary}
\theoremstyle{definition}
\newtheorem{definition}[theorem]{Definition}
\begin{document}

\title{Can Dehn surgery yield three connected summands?}
\author{James Howie}
\address{ James Howie\\
Department of Mathematics and Maxwell Institute for Mathematical Sciences\\
Heriot--Watt University\\
Edinburgh EH14 4AS }
\email{ jim@ma.hw.ac.uk}

\subjclass{Primary 57M25}

\keywords{Dehn surgery, Cabling Conjecture}

\maketitle

\begin{abstract}
A consequence of the Cabling Conjecture of Gonzalez-Acu\~{n}a and Short
is that Dehn surgery on a knot in $S^3$ cannot produce a manifold with more
than two connected summands.  In the event that some Dehn surgery produces
a manifold with three or more connected summands, then the surgery parameter
is bounded in terms of the bridge number by a result of Sayari.  Here this
bound is sharpened, providing further evidence in favour of the Cabling Conjecture.
\end{abstract}

\section{Introduction}

The Cabling Conjecture of Gonzalez-Acu\~{n}a and Short \cite{GAS}
asserts that Dehn surgery on a knot in $S^3$ can produce a reducible
$3$-manifold only if the knot is a cable knot and the surgery slope is
that of the cabling annulus.

The Cabling Conjecture is known to hold in many special cases
\cite{HM,HS,LZ,MT,Mu,Sch,Wu}.

If $k$ is the $(p,q)$-cable on a knot $K$, then the cabling annulus
on $k$ has slope $pq$, and the corresponding surgery manifold
$M(k,pq)$ splits as a connected sum $$M(K,p/q)\# L(p,q)$$
\cite{GL}. (Here $L(p,q)$ is a lens space.)  In particular both connected
summands are prime \cite{GL}.  Thus the Cabling Conjecture implies the
weaker conjecture below:

\begin{conjecture}[Two summands conjecture]
Let $k$ be a knot in $S^3$ and $r\in\Q\cup\{\infty\}$ a slope.
Then the Dehn surgery manifold $M(k,r)$ cannot be expressed as a
connected sum of three non-trivial manifolds.
\end{conjecture}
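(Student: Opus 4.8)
\medskip
\noindent\textbf{A possible line of attack.}
The plan is to argue by contradiction, reducing the statement to known results about reducible surgeries on torus and cable knots. One may assume $k$ is non-trivial, since surgery on the unknot never yields a manifold with three summands, and --- by the theorem of Gordon and Luecke that a reducible surgery slope on a knot in $S^3$ is necessarily an integer --- that $r=n\in\Z$. Write $X=E(k)$ for the exterior and $V$ for the surgery solid torus, so that $M(k,n)=X\cup_{\partial X}V$, and suppose for contradiction that $M(k,n)=M_1\#M_2\#M_3$ with every $M_i$ non-trivial.

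Since all three summands are non-trivial, $M(k,n)$ contains two disjoint, non-parallel $2$-spheres $\hat P_1,\hat P_2$, neither of which bounds a ball; choose them so that $p_1+p_2$ is minimal, where $p_i$ is the number of meridian discs in $\hat P_i\cap V$. Then $P_i:=\hat P_i\cap X$ is a planar surface properly embedded in $X$ with $p_i$ boundary components, all of slope $n$, and $P_1\cap P_2=\emptyset$. The non-triviality of $k$ forces $p_i\ge 2$: if $p_i=1$ then $P_i$ is a compressing disc for $\partial X$, so $X$ is a solid torus and $k$ is trivial; and if $p_i=2$ then $P_i$ is an incompressible annulus --- a compression would produce a sphere meeting $V$ exactly once, hence again $k$ trivial --- which is moreover not $\partial$-parallel (else $\hat P_i$ would bound a ball), hence an essential annulus in $X$ with integral, and therefore non-meridional, boundary slope $n$. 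By the classification of essential annuli in knot exteriors, $k$ would then be a torus knot or a cable knot, with $n$ the corresponding cabling slope; but $M(T_{a,b},ab)$ is a connected sum of two lens spaces, while for a $(p,q)$-cable $k$ of a knot $K$ the manifold $M(k,pq)=M(K,p/q)\#L(p,q)$ has both summands prime \cite{GL}. In either case $M(k,n)$ has at most two non-trivial summands, contradicting our assumption. Hence every reducing sphere of $M(k,n)$ --- in particular $\hat P_1$ and $\hat P_2$ --- meets $V$ in at least three discs.

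It remains to derive a contradiction from the existence of a reducible surgery all of whose reducing spheres meet the surgery core at least three times. Here I would run the combinatorial machinery of reducible surgeries in the bridge-position form developed by Sayari: put $k$ in minimal bridge position and compare $P_1$ and $P_2$ with the disc system of the bridge presentation (or with a minimal-genus Seifert surface of $k$), obtaining a pair of ``fat-vertexed'' graphs $\Gamma_1,\Gamma_2$ on the spheres $\hat P_1,\hat P_2$ --- with the $p_i$ discs $\hat P_i\cap V$ as (fat) vertices, and with the arcs of intersection as edges, having removed all closed intersection curves by the usual innermost-disc argument using that $X$ is irreducible and the $P_i$ may be taken incompressible. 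Analysing the faces of $(\Gamma_1,\Gamma_2)$ by the standard tools --- parity and valence conditions around the fat vertices, the bound on the number of Scharlemann cycles coming from the structure of the zero-surgery on $k$, the prohibition on extended Scharlemann cycles and on great webs, and an Euler-characteristic count on the planar graphs --- together with the extra rigidity forced by the presence of \emph{two} disjoint non-parallel reducing spheres, leads to Sayari's inequality bounding the surgery parameter, and with it each $p_i$, by a function of the bridge number $b(k)$; the main theorem of this paper sharpens that inequality. The aim would be to push these estimates all the way until they force some $p_i\le 2$, at which point the previous paragraph finishes the argument.

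The main obstacle is precisely this last step. Even after the present sharpening, the combinatorial estimates bound each $p_i$ only by a function of the bridge number, and do not on their own bring it down to $2$; eliminating the surviving graph configurations amounts essentially to proving the Cabling Conjecture for surgeries in which every reducing sphere meets the surgery core at least three times, which would need genuinely new ideas. By comparison, the reduction to integral slopes, the set-up of the planar surfaces, and the torus/cable bookkeeping at the end are all routine.
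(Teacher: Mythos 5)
This statement is a \emph{conjecture} in the paper, not a theorem: the paper offers no proof of it, and it remains an open problem. What the paper actually proves is the much weaker Theorem~\ref{main}, namely that \emph{if} $M(k,r)$ did decompose as $M_1\#M_2\#M_3$ with $M_1,M_2$ lens spaces and $M_3$ a homology sphere, then $|\pi_1(M_1)|+|\pi_1(M_2)|\le b+1$. So there is no ``paper's own proof'' against which to measure your proposal, and no proposal could succeed without a genuinely new idea.

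To your credit, you recognise this explicitly. Your reduction is sound as far as it goes: Gordon--Luecke reduces to integral slopes; the case $p_i=1$ gives a compressing disc and hence $k$ unknotted; the case $p_i=2$ gives an essential annulus with non-meridional integral boundary slope, forcing $k$ to be a torus or cable knot, and in either of those cases the surgered manifold has exactly two prime summands by \cite{GL} and Moser's computation, contradicting the hypothesis of three. That cleanly forces every reducing sphere to meet the filling torus at least three times. But the combinatorial machinery you then invoke --- Scharlemann cycles, parity, Euler characteristic counts on the intersection graphs, the extra rigidity from two disjoint reducing spheres --- yields, even after the sharpening in this paper, only a bound on the surgery parameter and the lens-space orders in terms of the bridge number, never the conclusion $p_i\le 2$. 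There is no known argument that pushes the estimates down far enough, and you correctly flag that the remaining step ``would need genuinely new ideas.'' So the gap is real, but it is exactly the gap the paper is honest about: this is the Two Summands Conjecture, it is open, and Theorem~\ref{main} is evidence for it, not a proof of it.

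One small point worth making: the reduction to $p_i\ge 3$, while correct, is not where the real content lies, and is subsumed by the existing literature (e.g.\ Gordon--Luecke and Scharlemann already handle the cabling slope case). Your time in any further attempt would be better spent on the graph-theoretic core, where Lemma~\ref{pres} and Corollary~\ref{3lens} in this paper --- translating subcomplexes of the dual $2$-complex into connected summands of $M$ and ruling out three cyclic free factors via the Scott--Wiegold result --- give the strongest currently available leverage.
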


Since any knot group has {\em weight} $1$
(in other words, is the normal closure of a single element),
the same is true for any homomorphic image of a knot group.
Thus the two summands conjecture would follow from the group-theoretic
conjecture below, which remains an open problem.

\begin{conjecture}
A free product of three non-trivial groups has weight
at least $2$.
\end{conjecture}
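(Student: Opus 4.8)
The plan is to argue by contradiction, combining a Bass--Serre reduction with the theory of one-relator products of groups. Suppose $G = A * B * C$ with all three factors non-trivial and suppose $G = \langle\langle g\rangle\rangle$ is the normal closure of a single element $g$. There is first an easy homological reduction: since $H_1(G)\cong H_1(A)\oplus H_1(B)\oplus H_1(C)$, if this group is not cyclic then the image of $g$ cannot generate it, so $\langle\langle g\rangle\rangle\ne G$; thus the substantive case is when $H_1(G)$ is cyclic, for instance when one or more of the factors is perfect. After replacing $g$ by a conjugate we may assume it is cyclically reduced in $A * B * C$. If $g$ were conjugate into a single factor, say $A$, then $\langle\langle g\rangle\rangle$ would lie in the normal closure of $A$, whose quotient $G/\langle\langle A\rangle\rangle = B * C$ is non-trivial; and if the cyclically reduced form of $g$ involved syllables from only two of the factors, say $A$ and $B$, then $g$ would lie in the subgroup $A * B$, hence $\langle\langle g\rangle\rangle$ in its normal closure, whose quotient is $C\ne 1$. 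Either would contradict $\langle\langle g\rangle\rangle = G$. So $g$ is a hyperbolic element (necessarily of infinite order, since torsion in a free product is elliptic) whose cyclically reduced form genuinely involves all three factors.

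Now regroup $G = A * (B * C)$ and study the one-relator product $Q := G/\langle\langle g\rangle\rangle$. The element $g$ is hyperbolic and not conjugate into $A$ or into $B * C$, so over this splitting it has even cyclically reduced syllable length $2k\ge 2$. If $2k\ge 4$, one wants a Freiheitssatz: a one-relator product $X * Y$ modulo a relator not conjugate into a factor should have $X$ and $Y$ embedded in the quotient, so $Q\supseteq A\ne 1$. When $A$, $B$, $C$ are locally indicable (in particular when they are torsion-free) this is exactly Howie's theorem, so the conjecture holds in that case; more generally one appeals to the high-power one-relator product theorems of Howie and of Duncan--Howie when $g$ is a proper power, and to small-cancellation results over free products when $g$ satisfies a metric condition over $A * (B * C)$. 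If instead $2k = 2$, write $g\sim xy$ with $x\in A\setminus\{1\}$ and $y\in(B * C)\setminus\{1\}$; since $g$ involves all three factors, $y$ involves both $B$ and $C$, so $y$ is hyperbolic in $B * C$ and of infinite order (or, in the degenerate sub-case where $y$ is conjugate into $B$ or $C$, one computes directly a non-trivial free-product quotient of $Q$). Let $p$ be the order of $x$ in $A$. From $xy = 1$ one gets $y^p = 1$ in $Q$, so $Q$ is the pushout of $A\leftarrow\mathbb{Z}/p\to(B * C)/\langle\langle y^p\rangle\rangle$ (with $\mathbb{Z}$ in place of $\mathbb{Z}/p$ if $x$ has infinite order); by the high-power theorem applied to the hyperbolic element $y$ of $B * C$, the right-hand map is injective and its target is non-trivial, so $Q$ is a non-trivial amalgamated free product containing $A$. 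In every case $Q\ne 1$, a contradiction.

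The step I expect to be the genuine obstacle is the one glossed over above: the Freiheitssatz, or even just non-triviality, for a one-relator product $X * Y/\langle\langle w\rangle\rangle$ with $w$ of syllable length $\ge 4$ when $X$ and $Y$ (here $A$ and $B * C$) may have torsion and $H_1$ is cyclic --- precisely the case left open by local indicability, by the high-power results and by small cancellation, and precisely the case that arises for Dehn surgery, where lens-space summands contribute finite cyclic factors. Even the weak statement that such a quotient is non-trivial is an open problem, closely tied to the Kervaire--Laudenbach conjecture; so in effect the reduction above derives this conjecture from an appropriate Freiheitssatz for one-relator products rather than proving it outright. Closing the gap --- perhaps by exploiting the rather special form that the relator $g$ is forced to take by the reductions above, or via a relative-asphericity or curvature argument for the action on the Bass--Serre tree of $A * (B * C)$ --- is where a new idea seems to be required.
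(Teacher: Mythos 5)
The statement you are trying to prove is presented in the paper as a \emph{conjecture}, explicitly described there as remaining ``an open problem''; the paper does not prove it. Its main theorem (Theorem A) is partial evidence in its favour, bounding the surgery parameter under the hypothesis that a three-summand surgery exists, not a proof of the group-theoretic statement. So there is no proof in the paper against which your attempt can be compared.

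Your reduction, and your own assessment of it, are both accurate. The homological reduction (if $\langle\langle g\rangle\rangle = G$ then the image of $g$ must generate $H_1(G)$, forcing $H_1(A)\oplus H_1(B)\oplus H_1(C)$ to be cyclic), the reduction to a cyclically reduced hyperbolic relator whose syllable structure meets all three factors, the regrouping $G = A*(B*C)$, and the cases handled by the Freiheitssatz for locally indicable one-relator products and by the high-power theorems are all sound. The genuine gap is exactly the one you flag at the end: when the factors have torsion (and in the surgery context $M_1,M_2$ are lens spaces, so two of the factors are finite cyclic of coprime order) and the relator is neither a high power nor a small-cancellation word over the free product, no Freiheitssatz --- nor even a nontriviality statement --- is known for $A*(B*C)/\langle\langle g\rangle\rangle$; this sits squarely in Kervaire--Laudenbach territory. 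Your argument is therefore a reduction of the conjecture to a different open problem, together with a proof in the special cases you list, not a proof of the conjecture. One further caveat in your length-$2$ case: the pushout description of $Q$ over $\mathbb{Z}/p$ needs $y$ to have order exactly $p$ in $(B*C)/\langle\langle y^p\rangle\rangle$, which the high-power theorems guarantee only for $p$ at least $4$ or so; for small $p$ that intermediate claim is itself unestablished, so even this sub-branch has an open case.
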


The best known upper bound for the number of connected summands in $M(k,r)$
is $3$, obtained by combining results of Sayari \cite{Sa1},
Valdez S\'{a}nchez \cite{V} and the author \cite{H}.  These results also show
that, should some $M(k,r)$ have
three connected summands, then two of these must be lens spaces
(necessarily with fundamental groups of coprime orders) and the
third must be a $\Z$-homology sphere. (See \cite{H} for details.)

Suppose that $k$ is a knot in $S^3$ with bridge number $b$,
and that the $3$-manifold $M$ obtained by performing Dehn surgery
on $k$ with surgery parameter $r$ has more than two connected
summands.  It is known from the work of Gordon and Luecke \cite{GL}
that $r$ must be an integer.

If $\ell_1,\ell_2$ are the orders of the fundamental groups
of the lens spaces, then Sayari \cite{Sa2} has proved  that $|r|=\ell_1\ell_2\le (b-1)(b-2)$.

In this paper we shall prove the following inequality.

\begin{thmA}\label{main}
Let $k$ be a knot in $S^3$ with bridge-number $b$.  Suppose that $r$ is a slope
on $k$ such that $M=M(k,r)=M_1\# M_2\# M_3$ where $M_1,M_2$ are
lens spaces and $M_3$ is a homology sphere but not a homotopy sphere.
Then $$|\pi_1(M_1)|+|\pi_1(M_2)|\le b+1.$$
\end{thmA}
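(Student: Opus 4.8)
The plan is to put $k$ in bridge position and study how the reducing spheres of $M$, pushed into the knot exterior, interact with a bridge sphere, via a combinatorial graph-of-intersection argument of the type used by Gordon--Luecke and by Sayari, but refined so as to yield a linear rather than a quadratic estimate.

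Concretely, let $\Sigma$ be a bridge sphere for $k$, so $S^3=B_+\cup_\Sigma B_-$ with $k\cap B_\pm$ a union of $b$ trivial arcs carrying disjoint bridge disks $D^\pm_1,\dots,D^\pm_b$. Write $X=S^3\setminus\mathring N(k)$, $T=\partial X$, $M=X\cup_T V$ with $V$ the surgery solid torus and $\tilde k$ its core. Since $M$ has three non-trivial summands, the theorem of Gordon--Luecke cited above forces $r$ to be an integer, so the surgery slope meets the meridian $\mu$ of $k$ exactly once on $T$. As $M=M_1\#M_2\#M_3$, choose two disjoint reducing spheres $S_1,S_2\subset M$ with $S_1$ cutting off a punctured $M_1$, $S_2$ cutting off a punctured $M_3$, and a punctured $M_2$ lying between them; isotope them to meet $V$ in meridian disks, so that $P_i:=S_i\cap X$ are planar surfaces whose boundary curves all have slope $r$. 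Arrange that $P_1,P_2$ have been chosen to minimise their total intersection with $Q:=\Sigma\cap X$ (equivalently, with the bridge disks) and with each other, using irreducibility of $X$ to remove trivial circles of intersection.

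Next, run the usual fat-vertex graph machinery on $(P_1\sqcup P_2)\cap Q$. Because $r$ is integral, the combinatorics along the boundary are tightly controlled: each component of $\partial P_i$ is an $r$-curve on $T$ and so meets the $2b$ meridional boundary circles of $Q$ in exactly $2b$ points, whence the dual graph $G_Q$ on $Q$ has only $2b$ fat vertices. Forbidding Scharlemann cycles and $\partial$-parallel or $S^1$-parallel families of edges — impossible here because the two sides of each $S_i$ are a punctured lens space and, beyond $S_2$, a punctured homology sphere that is \emph{not} a homotopy sphere, so in particular $M$ has no $S^2\times S^1$ summand and no reducing sphere of the excluded type — bounds the number of mutually parallel edges, and an Euler-characteristic count on $Q$ then bounds the total number of arcs. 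The reason for carrying $S_1$ and $S_2$ simultaneously, and for invoking the precise structure from \cite{H}, is that this is what converts the estimate into a bound on the \emph{sum} $\ell_1+\ell_2$: the arcs of $\tilde k$ together with the disk-and-sphere pieces they span in the punctured summands record the orders $\ell_1=|\pi_1(M_1)|$ and $\ell_2=|\pi_1(M_2)|$, while the bridge disks of $\Sigma$ supply essentially $b$ slots, plus one, into which these contributions must be packed.

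The main obstacle is precisely this last combinatorial step. Sayari's inequality $\ell_1\ell_2\le(b-1)(b-2)$ comes from a comparatively coarse count in which the two lens-space summands interact multiplicatively; sharpening it to the linear bound $\ell_1+\ell_2\le b+1$ requires extracting the strongest possible parallelism estimates from the graphs, using in an essential way that the third summand is a non-trivial homology sphere to exclude the configurations that would otherwise inflate the count, and then bookkeeping the two reducing spheres at once so that their contributions add instead of multiply. I expect the delicate part to be showing that no single bridge disk can absorb more than a bounded amount of winding from $\tilde k$ across both $S_1$ and $S_2$, which is what pins the right-hand side at $b+1$.
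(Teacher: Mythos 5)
Your setup is broadly the right one (two disjoint planar surfaces $P_1,P_2$ coming from reducing spheres, put into a thin/bridge level surface $Q$, run the fat-vertex intersection-graph machinery). But the core of your plan rests on a misconception that reverses the actual situation, and as a result the decisive idea of the proof is missing.

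You write that one should forbid Scharlemann cycles ``because the two sides of each $S_i$ are a punctured lens space and, beyond $S_2$, a punctured homology sphere.'' This is backwards. In this setting Scharlemann cycles are not an obstruction to be ruled out; they are \emph{guaranteed to occur} (by Gordon--Luecke, e.g.\ \cite[Proposition 2.8.1]{GL2}), precisely because the surgery produces lens space summands. Indeed, the first substantive lemma of the paper shows that every Scharlemann cycle in $G_i$ has length exactly $\ell_i=|\pi_1(M_i)|$; the cycle builds the punctured lens space $M_i$ out of $\widehat{P_i}$, a $1$-handle of the filling solid torus and the Scharlemann disk. The orders $\ell_1,\ell_2$ enter the estimate precisely as the lengths of these cycles, not as ``winding absorbed by bridge disks.'' With Scharlemann cycles (wrongly) excluded, the Euler-characteristic count you sketch cannot see $\ell_1$ or $\ell_2$ at all, so it cannot produce any bound on $\ell_1+\ell_2$.

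What actually drives the linear bound is a configuration the paper calls a \emph{sandwiched disk}: a disk $D\subset\widehat{Q}$ with no fat vertices in its interior, whose boundary is the union of a subarc of a Scharlemann cycle in $G_1$ and a subarc of a Scharlemann cycle in $G_2$. One shows (i) any two Scharlemann cycles in the same $G_i$ have the same label, so there are at least two with disjoint vertex sets, and if $\ell_1+\ell_2>(q+2)/2$ a counting argument forces two such cycles from $G_1$ and $G_2$ to trap a sandwiched disk; and (ii) sandwiched disks cannot exist, because the bigonal and triangular faces inside one would give a subcomplex whose fundamental group is a free product $\Z_{\ell_1}*\Z_{\ell_2}*\Z_3$, which would yield a connected summand of $M$ that is a sum of three lens spaces, contradicting the Scott--Wiegold theorem of \cite{H}. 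Since $q\le 2b$, this gives $\ell_1+\ell_2\le(q+2)/2\le b+1$. Your proposal has none of steps (i)--(ii), and its final paragraph --- which you flag as ``the delicate part'' --- does not correspond to any step in the actual argument. The gap is therefore the entire combinatorial core: you need the sandwiched-disk dichotomy, and before that you need to embrace rather than forbid the Scharlemann cycles, because their lengths are the quantities being bounded.
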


As an immediate consequence, we obtain a sharpening of Sayari's inequality.

\begin{corA}\label{maincor}
Under the hypotheses of Theorem \ref{main} we have
$$|r|=|\pi_1(M_1)|\cdot |\pi_1(M_2)|\le b(b+2)/4.$$
\end{corA}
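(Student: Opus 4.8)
The plan is to deduce Corollary~\ref{maincor} from Theorem~\ref{main} by a short elementary argument, using only the structural facts about $M$ already recalled in the introduction. Write $\ell_i=|\pi_1(M_i)|$ for $i=1,2$. Since $M_3$ is a homology sphere, $H_1(M)\cong H_1(M_1)\oplus H_1(M_2)$, and as $M_1,M_2$ are lens spaces this group has order $\ell_1\ell_2$; on the other hand $|H_1(M)|=|r|$ since $r$ is a (necessarily nonzero) integer by Gordon--Luecke \cite{GL}. Hence $|r|=\ell_1\ell_2$, which is the first equality in the statement. Moreover, by \cite{H} the orders $\ell_1,\ell_2$ are coprime, and since $M_1,M_2$ are non-trivial lens spaces we have $\ell_1,\ell_2\ge 2$; in particular $\ell_1\ne\ell_2$, so $|\ell_1-\ell_2|\ge 1$.

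Next I would invoke Theorem~\ref{main}, which gives $\ell_1+\ell_2\le b+1$. The target inequality then follows from the identity
\[
4\ell_1\ell_2=(\ell_1+\ell_2)^2-(\ell_1-\ell_2)^2 .
\]
Combining $\ell_1+\ell_2\le b+1$ with $(\ell_1-\ell_2)^2\ge 1$, the right-hand side is at most $(b+1)^2-1=b(b+2)$, and dividing by $4$ yields $|r|=\ell_1\ell_2\le b(b+2)/4$, as required.

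I do not expect any genuine obstacle, since all the real content sits in Theorem~\ref{main}. The one point that must not be overlooked is the role of coprimality: without the observation that $\ell_1\ne\ell_2$ one would only obtain the weaker bound $\ell_1\ell_2\le(b+1)^2/4$, and it is exactly the surplus term $(\ell_1-\ell_2)^2\ge 1$ that sharpens this to $b(b+2)/4$. It may be worth remarking in passing that this bound is attained (as far as the arithmetic is concerned) only when $\{\ell_1,\ell_2\}$ is a pair of consecutive integers summing to $b+1$, which could be relevant for any subsequent attempt to push the estimate further.
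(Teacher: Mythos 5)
Your argument is correct and is essentially the paper's own proof, just with the elementary arithmetic spelled out: the paper likewise derives $|r|=\ell_1\ell_2$ by computing $|H_1(M,\Z)|$ two ways and then notes that the bound $\ell_1\ell_2\le b(b+2)/4$ follows from Theorem~\ref{main} together with the fact that $\ell_1\ne\ell_2$.
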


We use the standard techniques of intersection graphs
developed by Scharlemann \cite{Sch1} and by Gordon and Luecke \cite{CGLS,GL2,GL3}.
In \S \ref{graphs} below, we recall the construction of the intersection
graphs in the particular context of this problem.  A key feature
of these is the existence of {\em Scharlemann cycles}, which correspond in
a well-understood way to the lens space summands.  In \S \ref{schar} we
show that, should the inequality $\ell_1+\ell_2\le b+1$ fail, then
we can find, trapped between two Scharlemann cycles, a {\em sandwiched disk}
(see Definition \ref{sand}).  We then show in \S \ref{sd} that sandwiched disks
are impossible, which completes our proof.

\section{The graphs}\label{graphs}

Throughout the remainder of the paper, we assume that the manifold
$M=M(k,r)$ obtained by $r$-Dehn surgery on $k\subset S^3$
is a connected sum of three factors $M_1,M_2,M_3$,
where $M_1$ and $M_2$ are lens spaces while $M_3$
is a (prime) integer homology sphere.  Note that, since $\pi_1(M)$
has weight $1$, the orders $\ell_1,\ell_2$ of
$\pi_1(M_1)$ and $\pi_1(M_2)$ are necessarily coprime.
It follows that the factors
$M_1,M_2,M_3$ are pairwise non-homeomorphic.

An essential embedded sphere $\Sigma\subset M$ necessarily separates,
with one component of $M\smallsetminus\Sigma$ homeomorphic to a punctured $M_s$ and the other
to a punctured $M_t\# M_u$, where $\{s,t,u\}=\{1,2,3\}$.  We will
say that such a $\Sigma$ {\em separates} $M_s$ and $M_t$ (and also
separates $M_s$ and $M_u$).

Let $P_1,P_2$ be disjoint planar surfaces in the exterior $X(k)$ 
of $k$ (the complement of an open regular neighbourhood of $k$ in $S^3$) that extend
to essential spheres $\widehat{P_1},\widehat{P_2}\subset M$ such that
$\widehat{P_i}$ separates $M_i$ and $M_3$.  Assume also
that $P_1,P_2$ have the smallest possible number of boundary components
amongst all such planar surfaces.

A standard argument ensures that we may also choose
$P_1,P_2$ to be disjoint (without increasing the number of boundary
components of either).

Following Gabai \cite[Section 4(A)]{G}, we put $k$ in thin position,
find a level surface $Q$ for $k$ and isotope $P:=P_1\cup P_2$
such that $P$ meets $Q$ transversely, and such that no component of $Q\cap P$ is an
arc that is boundary-parallel in $P$.
(The minimality condition in the definition of $P_1$ and $P_2$ ensures also that
no component of $Q\cap P$ is a boundary-parallel arc in $Q$.)

The number $q$ of boundary components of $Q$ is necessarily even, and is bounded
above by twice the bridge number, $q\le 2b$.  We can complete $Q$ to
a sphere $\widehat{Q}\subset S^3$ by attaching $q$ meridional disks.

We denote the intersection graph of $P_i$ and $Q$ in $\widehat{Q}$
by $G_i$ for $i=1,2$. The ({\em fat}) vertices of $G_i$ are the meridional disks
$\widehat{Q}\smallsetminus Q$, and the edges are the components of $P_i\cap Q$
(some of which may be closed curves rather than arcs).
Each fat vertex contains precisely one point of intersection of $k$ with
$\widehat{Q}$, so a choice of orientation for $k$ and for $Q$ induces an
orientation on the collection of fat vertices -- that is, a partition of
fat vertices into two types, which we call {\em positive} and
{\em negative}.  There are precisely $q/2$ vertices of each type.

Note that the graphs $G_1$ and $G_2$ have the same vertex set
but disjoint edges sets.  Let $G_Q$ denote their union: $G_Q:=G_1\cup G_2$.

Similarly, we denote the intersection graph of $P$ and $Q$ in
$\widehat{P}=\widehat{P_1}\cup\widehat{P_2}$ by $G_P$ (noting that this graph
is the union of two disjoint non-empty subgraphs $G_{P_i}:=G_P\cap\widehat{P_i}$, $i=1,2$,
and hence is not connected).

The edges incident at a vertex $v$ of $G_Q$ are labelled by the boundary
components of $P$.  These labels always occur in the same cyclic order around
$v$ (subject to change of orientation).  We choose a numbering $1,\dots,p$
of $\pi_0(\partial P)$ in such a way that the labels $1,\dots,p$ always occur
in that cyclic order around each vertex of $G_Q$ (without loss of generality,
clockwise for positive vertices and anti-clockwise for negative vertices).

The corner at a vertex $v$ between the edges labelled $x$ and $x+1$ (modulo $p$)
is also given a label: $g_x$ if $v$ is positively oriented, and $g_x^{-1}$ if
$v$ is negatively oriented.  Note that corners are arcs in $\partial X(k)$
with endpoints in $P$.  In the usual set-up for intersection disks, $P$
is connected, and one can interpret the labels $g_x^{\pm 1}$
as elements of $\pi_1(M)$ (relative to a base-point on $P$).  In our context
it is more natural to interpret $g_x^{\pm 1}$ as an element of the path-groupoid
$\Pi=\pi(M,P)$, whose elements are (free) homotopy classes of maps of pairs
from $([0,1],\{0,1\})$ to $(M,P)$.  Thus $\Pi$ is a connected $2$-vertex
groupoid whose vertex groups are isomorphic to $\pi_1(M)$.

Let $T\subset M$ denote the Dehn-filling solid torus, and $k'\subset T$
its core (a knot in $M$).

A {\em Scharlemann cycle} in
$G_i$ is a cycle $C$ bounding a disk-component $\Delta$ of $\widehat{Q}\smallsetminus G_i$
(which we call a {\em Scharlemann disk}), such that each edge of $C$, regarded as an
arc in $P_i$, joins two fixed components of $\partial P_i$ ($x$ and $y$, say).
Thus each edge of $C$ has label $x$ at one end, and $y$ at the other.
Since $x,y$ are consecutive edges of $G_i$ at each vertex of $C$, the edges
of $G_Q\cap\Delta$ between $x$ and $y$ at $v$ belong to $G_{3-i}$ and correspond to
intersection points of $k'$ with $P_{3-i}$.  Since $P_{3-i}$ is separating,
it follows that
$x-y$ is odd, and hence from the {\em parity rule} (see for example \cite[page 386]{GL2})
that all vertices of $C$ have the same orientation.

It is well-known (see for example \cite{CGLS,GL2}) that any Scharlemann cycle
in $G_i$ corresponds to a lens-space summand of $M$.
We have set things up in such a way that this summand is necessarily
isotopic to $M_i$, which leads to the following observation.
(Compare also \cite[Lemma 2.1]{Hoff}, which states a similar
conclusion under slightly different hypotheses.)

\begin{lemma}\label{samelength}
Any Scharlemann cycle in $G_i$ has length $\ell_i:=|\pi_1(M_i)|$.
\end{lemma}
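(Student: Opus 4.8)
The plan is to analyze the Scharlemann disk $\Delta$ bounded by a Scharlemann cycle $C$ in $G_i$ and to show directly that the piece of $M$ it defines is a punctured lens space $L(\ell,\ast)$ whose fundamental group has order exactly the length of $C$; since this summand is forced by our set-up to be $M_i$, the length must equal $\ell_i$. First I would recall the standard construction: the Scharlemann cycle $C$ has all edges joining the two fixed boundary components $x,y$ of $P_i$, and all its vertices carry the same orientation (as noted in the text, using the parity rule and the fact that $x-y$ is odd). Let $\lambda$ be the length of $C$, i.e. the number of its edges (equivalently, vertices). The corners of $C$ all lie in the annulus $A$ cut off on $\partial X(k)$ by the two curves of $\partial P_i$ labelled $x$ and $y$, so together with a subarc of $\widehat{P_i}$ they build a Möbius-band or annulus picture; attaching the $2$-handle along the corners and the disk $\Delta$, one sees that a regular neighbourhood of $\Delta\cup A\cup(\text{core of }T)$ is a punctured lens space whose fundamental group is cyclic of order $\lambda$. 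This is the classical argument of Gordon--Luecke and of \cite{CGLS,GL2}; I would simply cite it for the identification of the summand and for the fact that $|\pi_1(\text{summand})|=\lambda$.

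The second, and genuinely new, ingredient is the matching of this summand with $M_i$ rather than with $M_{3-i}$ or $M_3$. Here I would use the way $P_1$ and $P_2$ were chosen in \S\ref{graphs}: $\widehat{P_i}$ separates $M_i$ from $M_3$, and the lens-space summand produced by a Scharlemann cycle in $G_i$ lies on the side of $\widehat{P_i}$ that contains the Dehn-filling solid torus $T$ — more precisely, it is built from material in the once-punctured $M_i$ component, because the edges of $G_Q\cap\Delta$ strictly between $x$ and $y$ belong to $G_{3-i}$ and record intersections of the core $k'$ with $P_{3-i}$, which forces the Scharlemann disk to sit inside the $M_i$-side of $\widehat{P_i}$ and disjoint from $\widehat{P_{3-i}}$. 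Since $M_i$ is prime, the only lens-space connected summand available on that side is $M_i$ itself; as the three factors are pairwise non-homeomorphic (coprime orders), this identification is unambiguous. Hence $\pi_1$ of the summand is $\pi_1(M_i)$, of order $\ell_i$, and therefore $\lambda=\ell_i$.

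I expect the main obstacle to be exactly this localization step — verifying rigorously that the punctured lens space produced by the Scharlemann cycle in $G_i$ is swallowed by the $M_i$-side of $\widehat{P_i}$ and is not somehow straddling $\widehat{P_{3-i}}$ or absorbing part of $M_3$. The key is that $\widehat{P_{3-i}}$ is disjoint from $\widehat{P_i}$ and from $Q$ in the relevant region, and that the Scharlemann disk $\Delta$ together with its thickening meets $\widehat{Q}$ only in $\Delta$ itself, so after a small isotopy it misses $\widehat{P_{3-i}}$ entirely and lies on one fixed side of $\widehat{P_i}$; the orientation/parity data then pins down which side. Once that is in place, the length count $\lambda=|\pi_1(M_i)|=\ell_i$ is immediate, and the lemma follows. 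I would also remark (as the text does, comparing \cite[Lemma~2.1]{Hoff}) that this is a mild strengthening of a known statement, so the expository burden is to make the bookkeeping about which $P_i$ does what completely explicit rather than to invent new machinery.
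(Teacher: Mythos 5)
Your proposal correctly recalls the standard construction of a lens-space summand from a Scharlemann disk, and you correctly identify that the real content of this lemma is matching that summand to $M_i$ rather than to $M_{3-i}$. However, your mechanism for that matching step is not the one the paper uses, and as stated it does not work.

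You claim the Scharlemann disk $\Delta$ and the associated $1$-handle are ``forced'' to lie on the $M_i$-side of $\widehat{P_i}$ by orientation/parity data, and that after a small isotopy the construction misses $\widehat{P_{3-i}}$. Neither assertion is justified. First, the filling solid torus $T$ meets both sides of $\widehat{P_i}$ (it is cut by $\widehat{P_i}$ into the $1$-handles), so ``the side containing $T$'' is not well-defined. Second, at this stage of the argument nothing prevents $\Delta$ from containing edges of $G_{3-i}$ -- i.e.\ arcs of $P_{3-i}\cap Q$ -- so $\Delta$ genuinely meets $\widehat{P_{3-i}}$, and no small isotopy removes this. (The disjointness you want is exactly the content of the later Lemma~\ref{disjoint}, which requires Lemma~\ref{samelength} as input; invoking it here would be circular.) Third, there is in fact no a priori reason the disk sits on the $M_i$-side; ruling out the other side is where the real work happens.

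The paper's proof takes a different and essential step that your proposal omits entirely: it takes a one-sided regular neighbourhood $L$ of $\widehat{P_i}\cup H\cup\Delta$, observes that $L$ is a \emph{twice}-punctured lens space with boundary $\widehat{P_i}\cup\Sigma$, and notes that the new sphere $\Sigma$ meets $k'$ in two fewer points than $\widehat{P_i}$. By uniqueness of prime decomposition $L$ is a punctured copy of $M_1$ or $M_2$; if it were $M_{3-i}$, then $\Sigma$ would be a sphere separating $M_i$ from $M_3$ with fewer boundary intersections, contradicting the \emph{minimality} in the choice of $P_i$. Your proposal never uses the minimality hypothesis on $P_i$, which is the indispensable ingredient, so the gap is genuine: you would need to replace the ``localization'' paragraph by the twice-punctured-lens-space-plus-minimality argument to get a correct proof.
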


\begin{proof}
Without loss of generality, we may assume that $i=1$.
Let $C$ be a Scharlemann cycle in $G_1$, and $\Delta$ the corresponding
Scharlemann disk.  Assume that $x,y$ are the labels on the edges of $C$.

Following \cite{CGLS,GL2}, we construct a twice punctured
lens space in $M$ as follows.
 The fat vertices of $G_{P_1}$ can be regarded as
meridional slices of the filling solid torus $T$. 
The fat vertices $x$ and $y$ divide $T$ into two $1$-handles,
one of which -- $H$, say -- satisfies
 $\partial\Delta\subset P_1\cup \partial H$.

Then a regular neighbourhood $L$ of $\widehat{P_1}\cup H\cup\Delta$ is a twice-punctured
lens space, with $\pi_1(L)\cong\Z_\ell$, where $\ell$ is the length of $C$.

One component of $\partial L$ is $\widehat{P_1}$.  The second component $\Sigma$ has
precisely two fewer points of intersection with $k'$ than $\widehat{P_1}$.

By the uniqueness of the prime decomposition $M=M_1\# M_2\# M_3$,
$L$ is homeomorphic to a twice-punctured
copy of $M_1$ or of $M_2$.  In the latter case,
$\Sigma$ also separates $M_1$ from $M_3$, which contradicts the minimality
hypothesis on $P_1$.
Hence $L$ is homeomorphic to a twice-punctured copy of $M_1$,
whence   $\ell=\ell_1$ as claimed.
\end{proof}

 More generally,
we have the following essentially well-known result, which is an important tool
in our proof.

Define the $2$-complex $K$ as follows.  $K$ has two vertices, labelled
$1$ and $2$, and $p$ edges, labelled $g_1,\dots,g_p$.  The initial (resp. terminal)
vertex of $g_i$ is $1$ or $2$ depending on whether the vertex $i$ (resp. $i+1$)
of $G_P$ is contained in $P_1$ or in $P_2$.  The $2$-cells of $K$ are in one-to-one
correspondence with the disk-regions of $G_Q$; the attaching map for a $2$-cell
being read off from the corner-labels of the corresponding region of $G_Q$.

\begin{lemma}\label{pres}
Let $K_0$ be a subcomplex of $K$
with $H^1(K_0,\Z)=\{0\}$.
If $K_0$ is connected then $M$ has a connected summand with fundamental group
isomorphic to $\pi_1(K_0)$.  If $K_0$ is disconnected, then $M$ has a connected
summand with fundamental group
isomorphic to $\pi_1(K_0,1)*\pi_1(K_0,2)$.
\end{lemma}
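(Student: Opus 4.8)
The plan is to realise $\pi_1(K_0)$ (or the relevant free product) geometrically inside $M$ by building a $3$-submanifold analogous to the twice-punctured lens space $L$ constructed in the proof of Lemma~\ref{samelength}, and then using the prime decomposition of $M$ together with the hypothesis $H^1(K_0,\Z)=0$ to extract it as a connected summand. Concretely, I would first interpret the data of $K$ topologically: the fat vertices of $G_P$ are meridional disks of the Dehn-filling solid torus $T$, the edges $g_1,\dots,g_p$ of $K$ correspond to the $1$-handles of $T$ cut along these disks (so that a subcomplex of $K$ on the $1$-skeleton picks out a union $N$ of slices of $T$ together with some of the connecting $1$-handles), and each $2$-cell of $K$, having attaching word given by the corner-labels of a disk-region $\Delta$ of $G_Q$, corresponds to attaching that region $\Delta\subset\widehat Q$ as a $2$-handle along the solid-torus piece. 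Then for the subcomplex $K_0$ I would form $Y := \widehat P_0 \cup N_0 \cup (\text{2-cells of }K_0)$, where $\widehat P_0$ is the union of those sphere-components $\widehat P_i$ meeting the relevant vertices, $N_0$ is the corresponding union of $1$-handles of $T$, and the $2$-cells are the corresponding Scharlemann-type disk regions; a regular neighbourhood of $Y$ is a compact $3$-submanifold $W\subset M$ with $\pi_1(W)\cong\pi_1(K_0)$ (or the free product, in the disconnected case, because $\widehat P_0$ then has more than one component and $W$ is correspondingly built from pieces attached along a disconnected sphere-surface).

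The second step is to arrange that $\partial W$ consists of essential $2$-spheres, so that capping them off with punctured copies of the remaining prime factors is forced by uniqueness of prime decomposition. Each boundary component of $W$ is built from pieces of $\widehat P_0$ together with arcs of $\partial N_0$ and $\partial$(disk regions), hence is a $2$-sphere; I would use the hypothesis $H^1(K_0,\Z)=0$ to guarantee that $H_1(W)$ has no free part, so that $W$ cannot be (a punctured version of) something with positive first Betti number, and in particular every boundary sphere separates $M$. The inclusion $\widehat P_0\subset\partial W$ shows that these spheres are among (or refinements of) the essential spheres $\widehat P_i$, hence essential. By the uniqueness of the decomposition $M=M_1\#M_2\#M_3$, cutting $M$ along $\partial W$ and discarding everything outside $W$ exhibits $W$, with its boundary spheres filled in by balls, as one of the prime summands of $M$ — or rather as a connected sum of some subcollection of $\{M_1,M_2,M_3\}$, and since $\pi_1$ of the filled-in $W$ equals $\pi_1(K_0)$, that is precisely the asserted summand. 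In the disconnected case, $\widehat P_0$ separating and the two vertex groups of $K_0$ lying on opposite sides forces the corresponding summand to be the connected sum of the two pieces, giving the free product $\pi_1(K_0,1)*\pi_1(K_0,2)$.

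The main obstacle I anticipate is verifying that the regular neighbourhood $W$ really does have $\pi_1$ equal to $\pi_1(K_0)$ on the nose, rather than merely a quotient or an extension of it. This requires care because $W$ is assembled from the surface pieces $\widehat P_i$ (which are spheres, hence simply connected and contribute nothing), the $1$-handle pieces of $T$ (which give the free group on the edges of $K_0$ present), and the $2$-handle attachments (which impose exactly the relators read off from the corner-labels of the corresponding disk-regions of $G_Q$) — but one must check that no extra identifications arise from the way the $1$-handles meet $\widehat P_0$, and that the groupoid-valued corner labels $g_x^{\pm1}\in\Pi$ translate correctly into a genuine group presentation on each vertex group; this is essentially the content of the standard Scharlemann/Gordon--Luecke argument, but spelled out at the level of an arbitrary subcomplex rather than a single Scharlemann cycle. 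A secondary technical point is handling edges of $G_Q$ that are closed curves rather than arcs, and disk-regions of $G_Q$ whose boundary involves such curves or is not a single Scharlemann-type cycle; I would either argue these do not affect the subcomplexes $K_0$ we care about, or absorb them into the construction by a standard innermost-curve argument. Once $\pi_1(W)\cong\pi_1(K_0)$ is established, the extraction of the summand is a routine application of prime decomposition uniqueness as in Lemma~\ref{samelength}.
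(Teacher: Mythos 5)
Your proposal follows essentially the same strategy as the paper: realise $K_0$ geometrically inside $M$ (using the planar surfaces, the $1$-handles of the filling torus and the disk-regions of $G_Q$), take a regular neighbourhood, observe that its boundary consists of spheres, cap off by balls to obtain a connected summand, and use the weight/decomposition structure of $M$. The main slip is in the sphere-boundary step. You assert that each boundary component of $W$ ``is built from pieces of $\widehat P_0$ together with arcs of $\partial N_0$ and $\partial$(disk regions), hence is a $2$-sphere,'' but this is not a valid inference: the boundary of a regular neighbourhood of a $2$-complex in a $3$-manifold is assembled from annuli and disk pieces and can perfectly well have positive genus (e.g.\ the boundary of a neighbourhood of a theta-graph is genus two). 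The cohomological hypothesis $H^1(K_0,\Z)=0$ is exactly what is needed here, and the paper uses it for precisely this: since $\pi_1(N)\cong\pi_1(K_0)$ forces $H^1(N,\Z)=0$, Poincar\'e--Lefschetz duality (``half lives, half dies'') implies $\partial N$ consists entirely of spheres. You invoke $H^1(K_0,\Z)=0$, but only to deduce that $H_1(W)$ has no free part and that the boundary spheres separate --- neither of which is the point (separation is automatic since $H_2(M)=0$). Redirect the $H^1=0$ hypothesis at Poincar\'e duality and that step is repaired.

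Two smaller remarks. First, for the disconnected case the paper has a cleaner device than the one you sketch: add a single connecting edge $g_z$ to $K_0$ to form a connected complex $K_1$ with $\pi_1(K_1)\cong\pi_1(K_0,1)*\pi_1(K_0,2)$, and then apply the connected case verbatim; this avoids having to argue separately about a disconnected $W$ and about combining two summands. Second, your choice to use the capped-off spheres $\widehat P_i$ rather than the planar surfaces $P_i$ in building $Y$ is actually a reasonable improvement, since $\widehat P_i$ is simply connected and makes the identification $\pi_1(W)\cong\pi_1(K_0)$ more transparent --- the concern you flag about ``extra identifications from the way the $1$-handles meet $\widehat P_0$'' is a real one, and using the spheres rather than the punctured planar surfaces is the cleanest way to see that the vertex pieces contribute nothing to $\pi_1$.
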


\begin{proof}

The intersection of $\widehat{P}$ with the filling solid torus $T$
is precisely the set of fat vertices of $G_P$, each of which
is a meridional disk in $T$.
These disks divide $T$ into $1$-handles
$H_1,\dots, H_p$, where $H_i$ is the section of $T$ between the fat vertices $i$ and $i+1$
(modulo $p$).

Suppose first that $K_0$ is connected.  Define $K'$ to be the union of the following
subsets of $M$:
\begin{enumerate}
 \item $P_1$ if $K_0$ contains the vertex $1$ of $K$;
\item $P_2$ if $K_0$ contains the vertex $2$ of $K$;
\item the one-handle $H_i$ for each edge $g_i\in K_0$;
\item the disk-region of $G_Q$ corresponding to each $2$-cell of
$K_0$.
\end{enumerate}

It is easy to check that $K'$ is connected, and that $\pi_1(K')\cong\pi_1(K_0)$.
Let $N$ be a regular neighbourhood of $K'$ in $M$

Then $N$ is a compact, connected, orientable $3$-manifold with
$\pi_1(N)\cong \pi_1(K_0)$ and hence $H^1(N,\Z)=\{0\}$.  It follows that
$\partial N$ consists entirely of spheres, by Poincar\'{e} duality.

Capping off each boundary component of $N$ by a ball yields a closed manifold $\widehat{N}$
with $\pi_1(\widehat{N})\cong\pi_1(N)\cong \pi_1(K_0)$, and $\widehat{N}$ is a connected summand of $M$ since
$N\subset M$.

\medskip
Next suppose that $K_0$ is disconnected.  Then $K_0$ contains both vertices $1,2$
of $K$, but no edge from $1$ to $2$.  Choose an edge $g_z$ of $K$ joining $1$ to $2$, and define
$K_1=K_0\cup\{g_z\}$.  Then $K_1$ is connected and $\pi_1(K_1)\cong\pi_1(K_0,1)*\pi_1(K_0,2)$.
Replacing $K_0$ by $K_1$ in the above gives the result.
\end{proof}

\begin{corollary}\label{3lens}
No subcomplex of $K$ has fundamental group which is a free product of three or more
finite cyclic groups.
\end{corollary}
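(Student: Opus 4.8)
The plan is to argue by contradiction, using Lemma~\ref{pres}. Suppose $K_0$ is a subcomplex of $K$ whose fundamental group $G$ --- that is, $\pi_1(K_0)$ if $K_0$ is connected, or $\pi_1(K_0,1)*\pi_1(K_0,2)$ if $K_0$ is disconnected --- is isomorphic to a free product $\Z_{n_1}*\cdots*\Z_{n_k}$ of $k\ge 3$ nontrivial finite cyclic groups. The first step is to verify the hypothesis of Lemma~\ref{pres}: since $G$ has finite abelianisation, so does $H_1(K_0;\Z)$ (which equals $G^{\mathrm{ab}}$ in the connected case, and the direct sum of the abelianisations of the two component groups --- again $G^{\mathrm{ab}}$ --- in the disconnected case), and therefore $H^1(K_0;\Z)=\{0\}$ by the universal coefficient theorem. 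Lemma~\ref{pres} then furnishes a closed, connected, orientable $3$-manifold $\widehat{N}$ that is a connected summand of $M$, with $\pi_1(\widehat{N})\cong G$.

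The second step is to bring in the uniqueness of the prime decomposition of compact orientable $3$-manifolds. As $\widehat{N}$ is a connected summand of $M=M_1\#M_2\#M_3$ and each $M_i$ is prime, $\widehat{N}$ is homeomorphic to a connected sum of a sub-collection of $\{M_1,M_2,M_3\}$ (possibly together with some homotopy $3$-spheres). Hence $\pi_1(\widehat{N})$ is a free product of a sub-collection of $\{\pi_1(M_1),\pi_1(M_2),\pi_1(M_3)\}$. Here $\pi_1(M_1)$ and $\pi_1(M_2)$ are nontrivial finite cyclic groups, while $\pi_1(M_3)$ --- being the fundamental group of a nontrivial prime integer homology sphere --- is nontrivial, perfect, and not infinite cyclic; in particular each of the three groups is freely indecomposable, and none is infinite cyclic.

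The final step is to compare the two resulting free-product decompositions of $\pi_1(\widehat{N})$ by means of the Kurosh/Grushko uniqueness theorem for decompositions into freely indecomposable and infinite cyclic factors. One decomposition has $k\ge 3$ factors, all nontrivial finite cyclic; the other has at most three factors, each freely indecomposable and none infinite cyclic. Uniqueness forces $k=3$ and a bijection of $\{\Z_{n_1},\Z_{n_2},\Z_{n_3}\}$ with $\{\pi_1(M_1),\pi_1(M_2),\pi_1(M_3)\}$ up to isomorphism; but $\pi_1(M_3)$ is perfect and nontrivial, so it is not isomorphic to any $\Z_{n_i}$, a contradiction. (If strictly fewer than three of the $\pi_1(M_i)$ occur, then $\pi_1(\widehat{N})$ has at most two freely indecomposable free factors, and so cannot be a free product of three or more nontrivial groups at all.)

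I expect no serious obstacle here --- the result is, as the text notes, essentially well known --- but the one point demanding care is the bookkeeping with the Grushko uniqueness theorem, and in particular the observation that the homology-sphere summand $M_3$ cannot play the role of one of the cyclic free factors. Verifying the $H^1$ hypothesis of Lemma~\ref{pres} and invoking the prime decomposition theorem are routine.
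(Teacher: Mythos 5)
Your proof is correct, but it takes a genuinely different route from the paper's. The paper's proof is one line: by Lemma~\ref{pres} such a subcomplex $K_0$ would force $M$ to have a connected summand that is a connected sum of three lens spaces, and this is ruled out by citing Corollary~5.3 of \cite{H} --- a geometric consequence of the Scott--Wiegold theorem (a free product of three nontrivial cyclic groups has weight $\ge 2$), using the fact that $\pi_1(M(k,r))$ has weight $1$. Your argument avoids \cite{H} entirely: after obtaining the summand $\widehat{N}$ with $\pi_1(\widehat{N})\cong\Z_{n_1}*\cdots*\Z_{n_k}$, you appeal to uniqueness of the Kneser--Milnor prime decomposition to identify the prime summands of $\widehat{N}$ with a subset of $\{M_1,M_2,M_3\}$, then use the Grushko/Kurosh uniqueness of decompositions into freely indecomposable factors to force $k=3$ and a matching of $\{\Z_{n_i}\}$ with $\{\pi_1(M_i)\}$, and finally derive the contradiction from the fact that $\pi_1(M_3)$ is nontrivial and perfect (so not cyclic). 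What each approach buys: yours is more self-contained (no appeal to the rather deep Scott--Wiegold result) but leans explicitly on the standing hypothesis that $M_3$ is a $\Z$-homology sphere; the paper's is shorter and would work even without any knowledge of the nature of $M_3$, since \cite{H} gives the conclusion purely from the weight-one property of knot groups. Your explicit verification of the $H^1(K_0,\Z)=\{0\}$ hypothesis of Lemma~\ref{pres}, which the paper leaves implicit, is a correct and worthwhile addition.
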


\begin{proof}
Suppose that $K$ has such a subcomplex. Then by Lemma \ref{pres} $M$ has a connected summand
which is the connected sum of three lens spaces.  This contradicts \cite[Corollary 5.3]{H}.
\end{proof}

Finally, the element $R=g_1g_2\dots g_p\in\pi_1(M)$ is a {\em weight element} -- that is,
its normal closure is the whole of $\pi_1(M)$ --
since it is represented by a meridian in $S^3\smallsetminus k$.  This leads to the following
observation, which will be useful later.

\begin{lemma}\label{bigon}
Let $x\in\{1,\dots,p\}$.  Then there is at least one integer $i\in\{1,\dots,(p-2)/2\}$ such
that no $2$-gonal region of $G_Q$ has corners $g_{x+i}$ and $g_{x-i}$ (or $g_{x+i}^{-1}$ and
$g_{x-i}^{-1}$).
\end{lemma}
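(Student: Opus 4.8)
The plan is to argue by contradiction. Suppose that for \emph{every} $i\in\{1,\dots,(p-2)/2\}$ there is a $2$-gonal region $D_i$ of $G_Q$ whose two corners are $g_{x+i}$ and $g_{x-i}$ (both at positively oriented vertices) or $g_{x+i}^{-1}$ and $g_{x-i}^{-1}$ (both at negatively oriented vertices); these $D_i$ are pairwise distinct $2$-cells of $K$ since their corner-label pairs are distinct. The first task is to read off the relation carried by such a bigon. A $2$-gonal region $D$ of $G_Q$ meets $Q$ in a rectangle $R\subset Q\subset M$ whose boundary consists of the two edges of $D$ (arcs in $P$) together with the two corners of $D$, which are arcs on $\partial T$; pushing each corner slightly into $T$ along the core of the $1$-handle $H_j$ it traverses produces an embedded disk in $M$ whose boundary, since any path in $P$ is trivial in the path-groupoid $\Pi=\pi(M,P)$, represents the product of two of the elements $g_j^{\pm1}$. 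For $D_i$ this product is $g_{x+i}g_{x-i}$, so $g_{x+i}g_{x-i}=1$ in $\Pi$; the two admissible corner-labellings in the statement yield this same relation, which — together with the parity rule, which forces the two vertices of such a bigon to have the \emph{same} orientation because $x+i$ and $x-i$ have the same parity — is exactly why they are grouped together. The sign/orientation bookkeeping needed to pin this down precisely is the least problematic part of the argument.

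Granting this, the $(p-2)/2$ relations $g_{x+i}=g_{x-i}^{-1}$ (indices taken modulo $p$) pair off all of the generators except $g_x$ and $g_{x+p/2}$, and rewrite the weight element $R=g_1g_2\cdots g_p$ as $w\,g_{x+p/2}\,w^{-1}\,g_x$ with $w=g_{x+1}g_{x+2}\cdots g_{x+(p-2)/2}$. I would now combine the $2$-cells $D_1,\dots,D_{(p-2)/2}$ with the $1$-skeleton of $K$ and with the Scharlemann cycles in $G_1$ and $G_2$ corresponding to the summands $M_1,M_2$ (whose relators are $\ell_1$-th and $\ell_2$-th powers of suitable consecutive subwords $g_ag_{a+1}\cdots g_{b-1}$, by the standard theory together with Lemma \ref{samelength}), and extract from this a connected subcomplex $K_0\subseteq K$ with $H^1(K_0,\Z)=\{0\}$ whose fundamental group is a free product $\Z_{\ell_1}*\Z_{\ell_2}*\Z_m$ with $m\ge2$, the third factor being assembled from $g_x$, $g_{x+p/2}$ and the portion of the weight relation left over after the pairing. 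By Lemma \ref{pres}, $M$ then has a connected summand equal to a connected sum of three lens spaces, contradicting Corollary \ref{3lens}; this contradiction proves the lemma. The minimality assumption on the number of boundary components of $P_1,P_2$ may also be needed here, to exclude degenerate configurations and to control how the edges of the $D_i$ distribute between $G_1$ and $G_2$.

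The step I expect to be the main obstacle is the construction and verification of $K_0$. One must choose its edges and $2$-cells so that $H^1(K_0,\Z)$ vanishes (so that Lemma \ref{pres} applies) while simultaneously arranging that $\pi_1(K_0)$ genuinely splits as a free product of three non-trivial \emph{finite} cyclic groups; in particular the third factor has to be shown to be finite and non-trivial — not, for instance, an infinite cyclic factor, which would be no contradiction at all. Making this precise requires careful attention to the partition of $\pi_0(\partial P)$ into $\partial P_1$ and $\partial P_2$, equivalently to which of the two vertices of $K$ each edge $g_j$ joins, and to the way the Scharlemann-cycle relators interact with the pairing relations $g_{x+i}=g_{x-i}^{-1}$; it is here that the precise form of the relation extracted in the first step, and not merely its length, will be used.
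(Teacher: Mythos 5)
Your opening move matches the paper exactly: assume for contradiction that a suitable bigon $D_i$ exists for every $i$, read off the relations $g_{x+i}=g_{x-i}^{-1}$ in $\Pi$, and observe that the weight element $W=g_1\cdots g_p$ is then conjugate to $g_x\,U\,g_y\,U^{-1}$ with $U=g_{x+1}\cdots g_{x+(p-2)/2}$ and $y=x+p/2$. Up to that point you are on the paper's track.

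From there you diverge, and the divergence is where the gap is. You propose to feed the bigons and the two Scharlemann disks into a subcomplex $K_0$ of $K$ and invoke Lemma \ref{pres} and Corollary \ref{3lens}. This does not work. First, the subcomplex you describe has (essentially) $p-2$ generators (every $g_j$ except $g_x,g_{x+p/2}$), $(p-2)/2$ pairing relations, and two Scharlemann relations; its fundamental group therefore retains a nontrivial free part unless $p$ is very small, so $H^1(K_0,\Z)\neq 0$ and Lemma \ref{pres} simply does not apply. Second — and this is the deeper problem, which you yourself flag — the ``third factor assembled from $g_x$, $g_{x+p/2}$ and the portion of the weight relation'' has nowhere to live: the weight relation $g_1\cdots g_p=1$ is not a $2$-cell of $K$, so no subcomplex of $K$ carries it, and there is no reason the leftover piece should be a finite cyclic group (or finite at all). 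So the intended contradiction with Corollary \ref{3lens} never materializes.

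The paper finishes much more directly and needs no subcomplex at all. Since $g_x$ is the label of a corner, the corresponding arc runs through a single $1$-handle of the filling torus, and one can close it up inside $M$ to see that $g_x$ is conjugate in $\pi_1(M)$ to an element of $\pi_1(M_s)$ for some $s\in\{1,2,3\}$; likewise $g_y$ is conjugate into some $\pi_1(M_t)$. Hence $W$, being conjugate to $g_xUg_yU^{-1}$, lies in the normal closure of $\pi_1(M_s)\cup\pi_1(M_t)$ inside $\pi_1(M)=\pi_1(M_1)*\pi_1(M_2)*\pi_1(M_3)$. Since all three free factors are nontrivial, this normal closure is a proper subgroup — contradicting that $W$ is a weight element. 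You should replace the entire $K_0$ construction with this free-product argument; it is shorter, avoids the $H^1$ hypothesis of Lemma \ref{pres}, and does not require locating the Scharlemann cycles or tracking their labels at all.
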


\begin{proof}
Otherwise we have $g_{x+i}=g_{x-i}^{-1}$ in $\pi_1(M)$ for each $i=1,\dots,(p-2)/2$,
and hence the weight element $W=g_1\dots g_p$ is conjugate to a word of the
form $g_xUg_yU^{-1}$ (where $U=g_{x+1}\cdots g_{x+(p-2)/2}$ and $y=x+\frac{p}{2}$ modulo $p$).
Moreover, $g_x$ is conjugate in $\pi_1(M)$ to an element
of $\pi_1(M_i)$ for $i\in\{1,2,3\}$,
and a similar statement holds for $g_y$.  Hence $W$ belongs to the normal closure in
$\pi_1(M)=\pi_1(M_1)*\pi_1(M_2)*\pi_1(M_3)$ of the free factors containing
conjugates of $g_x$ and $g_y$.  Since all three free factors are non-trivial,  this normal
subgroup is proper, which contradicts
the fact that $W$ is a weight element.
\end{proof}

\section{Analysis of Scharlemann cycles}\label{schar}

By \cite[Proposition 2.8.1]{GL2} there are Scharlemann cycles in $G_1$ and
in $G_2$.  In this section we show that, if $\ell_1+\ell_2$ is big enough, then
these form a configuration we call a {\em sandwiched disk} (which we will show
in the next section to be impossible).  Our next two results should be
compared to \cite[Lemmas 3.2 and 5.3]{Sa2} and \cite[Theorem 2.4]{GL3} respectively,
where the conclusions are similar but the hypotheses slightly different.

\begin{lemma}\label{disjoint}
If $\Delta$ is a Scharlemann disk in bounded by a Scharlemann cycle in $G_1$
(resp. $G_2$) then $\Delta$ contains no edges of $G_2$ (resp. $G_1$).
\end{lemma}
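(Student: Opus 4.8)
The plan is to argue by contradiction, re-using the twice-punctured-lens-space construction from the proof of Lemma \ref{samelength}. By symmetry it suffices to treat the case $i=1$. So let $C$ be a Scharlemann cycle in $G_1$, with edges labelled by the components $x,y$ of $\partial P_1$, let $\Delta$ be the Scharlemann disk it bounds, and suppose for contradiction that $\Delta$ contains an edge $e$ of $G_2$. After the standard reduction that $P\cap Q$ has no simple closed curve components, $e$ is an arc; and since the only fat vertices met by $\overline{\Delta}$ are the vertices of $C$ (recall $\Delta$ is a component of $\widehat{Q}\smallsetminus G_1$, and $G_1$ contains every fat vertex), both endpoints of $e$ lie on vertices of $C$. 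At such a vertex $v$ the end of $e$ lies in the corner of $v$ facing $\Delta$, which is the corner of $C$ at $v$, between its two edges, labelled $x$ and $y$; hence $e$ has at $v$ a label $z$ that, in the cyclic order $1,\dots,p$ around $v$, lies strictly between $x$ and $y$ on the side of $\Delta$. Since $x$ and $y$ are consecutive among the components of $\partial P_1$, every such intermediate label -- in particular $z$ -- is a component of $\partial P_2$.

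Next I would pin down the geometry, following the proof of Lemma \ref{samelength}. The components $x$ and $y$ bound meridional disks $D_x,D_y$ of the filling torus $T$, cutting $T$ into two $1$-handles, one of which -- $H$ -- satisfies $\partial\Delta\subset P_1\cup\partial H$; and a regular neighbourhood $L$ of $\widehat{P_1}\cup H\cup\Delta$ is a twice-punctured copy of $M_1$. Pushing that analysis one step further: since $L$ lies on the side of $\widehat{P_1}$ containing $H\cup\Delta$ and $\widehat{L}\cong M_1$, the uniqueness of the prime decomposition $M=M_1\# M_2\# M_3$ forces the complementary side of $\widehat{P_1}$ to cap off to $M_2\# M_3$; thus $H$ (indeed its interior, which meets $\widehat{P_1}$ only in $D_x\cup D_y\subset\partial H$) lies on the $M_1$-side of $\widehat{P_1}$. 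On the other hand $\widehat{P_2}$, being an essential sphere disjoint from $\widehat{P_1}$ which separates $M_2$ from $M_1\# M_3$, cannot lie on the $M_1$-side of $\widehat{P_1}$: it would otherwise be a sphere in a once-punctured copy of the prime manifold $M_1$, hence either inessential or parallel to $\widehat{P_1}$, and neither is possible. So $\widehat{P_2}$ lies on the complementary side.

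To finish, observe that because the corners of $\partial\Delta$ run along the part of $\partial T$ spanned by $H$, the labels lying between $x$ and $y$ on the side of $\Delta$ are exactly those whose meridional disks of $T$ lie in the interior of $H$. In particular the meridional disk $D_z$ bounded by $z$, which is a piece of $\widehat{P_2}$, lies in the interior of $H$, and hence on the $M_1$-side of $\widehat{P_1}$; this contradicts the previous paragraph and completes the proof. The case $i=2$ is identical with the roles of the two indices exchanged.

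I expect the main obstacle to be the bookkeeping in the second paragraph: identifying precisely which side of $\widehat{P_1}$ the $1$-handle $H$ lies on, and confirming that $\widehat{P_2}$ is confined to the other side. Both points are refinements of the case analysis already carried out in the proof of Lemma \ref{samelength} (using uniqueness of the prime decomposition and of its splitting spheres), so I would lean on that argument rather than redo it from scratch. Everything else -- the reduction to arcs, the identification of the label $z$ as a component of $\partial P_2$, and the location of $D_z$ inside $H$ -- is a routine unwinding of the definitions of the corner labels and of the handle $H$.
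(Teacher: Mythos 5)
Your proposal is correct, but it takes a genuinely different route from the paper's. The paper argues combinatorially: assuming $G_2\cap\Delta\neq\emptyset$, it analyzes the subgraph $Y=G_2\cap\Delta$ on the vertices of $C$, exhibits an explicit Scharlemann cycle of $G_2$ inside $\Delta$, and uses the path-groupoid relations from $2$-gonal regions together with the relation $h^{\ell_1}=1$ for the corner-word $h=g_x\cdots g_{x+2t}$ to force $g_{x+t}$ to have order $\ell_1$, whence $\ell_2=\ell_1$, contradicting coprimality. Your argument is instead topological: you reuse the twice-punctured lens space $L$ from Lemma \ref{samelength}, locate which side of $\widehat{P_1}$ the $1$-handle $H$ lies on (the $M_1$-side, since $\widehat{L}\cong M_1$ forces $\widehat{S}\cong M_1$ by uniqueness of the prime decomposition), show $\widehat{P_2}$ must lie on the complementary $M_2\#M_3$-side (irreducibility of the lens space $M_1$ rules out an essential sphere separating $M_2$ from $M_1\#M_3$ inside once-punctured $M_1$), and observe that a $G_2$-edge in $\Delta$ would place a meridional disk $D_z\subset\widehat{P_2}$ in the interior of $H$, i.e.\ on the wrong side — a contradiction. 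Both arguments ultimately rest on uniqueness of the prime decomposition together with the fact that $M_1, M_2, M_3$ are pairwise distinct. Your version is arguably more conceptual, localizing the obstruction to a side-of-a-sphere incompatibility and avoiding the label bookkeeping and groupoid computation; the paper's version stays within the intersection-graph machinery and also explicitly produces a Scharlemann cycle of $G_2$ inside $\Delta$ as a by-product. One minor point: you invoke a ``standard reduction'' to eliminate closed curves of $P\cap Q$, whereas the paper explicitly allows closed curve components; your reduction is legitimate (an innermost-disk argument combined with the minimality of the number of boundary components of $P_i$ disposes of them), but you should note that this is being added to the paper's setup rather than already present in it.
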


\begin{proof}
Suppose that $\Delta$ is bounded by a Scharlemann cycle $C$ in $G_1$,
and that it contains edges of $G_2$.  By \cite[Proposition 2.8.1]{GL2}
we know that there exists a Scharlemann cycle in $G_2\cap\Delta$.
We will find such a Scharlemann cycle explicitly, and use it to obtain a contradiction.

Recall that $C$ has length $\ell_1$, by Lemma \ref{samelength}.
Let $v_1,\dots,v_{\ell_1}$ denote the vertices of $C$ in cyclic
order.  Each edge of $C$ has labels $x$ and $x+2t+1$, say, which correspond to vertices
in $G_{P_1}$, and the intermediate labels $x+1,\dots,x+2t$ correspond to vertices
of $G_{P_2}$.  (Necessarily, these are even in number and alternating in orientation,
since they correspond to consecutive intersection points of $k'$ with $\widehat{P_2}$
between two consecutive intersection points of $k'$ with $\widehat{P_1}$.)

The graph $Y:=G_2\cap\Delta$ has $\ell_1$ vertices, each of valence $2t$
and each of the same orientation (which we assume to be positive).

If $\ell_1=2$, then every edge of $Y$ joins $v_1$ to $v_2$.  Such an edge has
labels $x+j$ at one end and $x+2t+1-j$ at the other, for some $j$.  The two edges
whose labels are $x+t$ and $x+t+1$ bound a $2$-gonal region, and hence form a Scharlemann
cycle of length $2$.  But then $\ell_1=\ell_2=2$, contradicting the fact that
$\ell_1,\ell_2$ are coprime.

Suppose then that $\ell_1>2$.  There must be a vertex $v_j$ in $C$ that is joined
only to $v_{j-1}$ and $v_{j+1}$ (subscripts modulo $\ell_1$)
by edges of $Y$.  In particular
there are two consecutive vertices of $C$ that are joined by $s\ge t$ edges
of $Y$.  The resulting $s$ $2$-gonal regions of $G_Q\cap\Delta$
give rise to relations $g_{x+j}g_{x+2t-j}=1$ for $0\le j\le s-1$ in the 
path-groupoid $\Pi=\pi(M,P)$.  But all the corners of the Scharlemann disk $\Delta$
have label $h:=g_xg_{x+1}\cdots g_{x+2t}$,
so $h$ has order $\ell_1$ in $\Pi$.  Hence $g_{x+t}$ also has order $\ell_1>2$.
Hence also $s=t$ in the above, for otherwise $g_{x+t}^2=1$ in $\Pi$.

Choose a pair $v_i,v_j$ of vertices of $C$ with $i<j-1$ with $j-i$ minimal
subject to the condition that $v_i,v_j$ are joined by an edge of
$Y$.
Then each pair $(v_i,v_{i+1}),\dots,(v_{j-1},v_j)$ is joined by {\em precisely}
$t$ edges of $Y$, so there is an edge joining $v_i$ and $v_j$
that has labels $x+t$ and $x+t+1$, and this forms part of a Scharlemann cycle
of length $j+1-i$ in $G_2$.  (See Figure 1.)

\begin{center}
\scalebox{0.6}[0.6]{\includegraphics{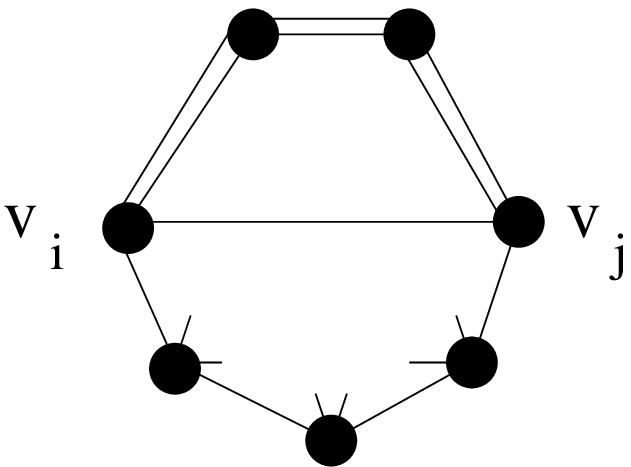}}
\\
Figure 1
\end{center}

 Since $g_{x+t}$ has order
$\ell_1$ in $\Pi$, we deduce that $\ell_1=\ell_2$, which again contradicts
the fact that $\ell_1,\ell_2$ are coprime.
\end{proof}

In particular, if $C$ is a Scharlemann cycle in $G_1$ or $G_2$, then the two
labels appearing on the edges of $C$ are consecutive (modulo $p$): say $x,x+1$.
We call $x$ the {\em label} of $C$.  Note that all the corners of
the corresponding Scharlemann disk have the same label $g_x$ or $g_x^{-1}$.

\begin{corollary}\label{samelabel}
Any two Scharlemann cycles in $G_1$ (respectively, in $G_2$)
have the same label.
\end{corollary}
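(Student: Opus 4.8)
The plan is to argue by contradiction, treating Scharlemann cycles in $G_1$ (the case of $G_2$ being symmetric). Suppose then that $C$ and $C'$ are Scharlemann cycles in $G_1$ with distinct labels $x\neq x'$. By Lemma \ref{samelength} both have length $\ell_1$, and by Lemma \ref{disjoint} their Scharlemann disks $\Delta,\Delta'$ contain no edges of $G_2$; consequently each of $\Delta,\Delta'$ is a disk-region not merely of $\widehat{Q}\smallsetminus G_1$ but of the full graph $G_Q$, and so corresponds to a $2$-cell of the complex $K$ of Lemma \ref{pres}. Since the vertices of a Scharlemann cycle all carry one and the same orientation, every corner of $\Delta$ has the single label $g_x$ and every corner of $\Delta'$ the single label $g_{x'}$; hence these two $2$-cells are attached along the words $g_x^{\ell_1}$ and $g_{x'}^{\ell_1}$. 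Finally, because the two labels of a Scharlemann cycle in $G_1$ are components of $\partial P_1$, the edges $g_x$ and $g_{x'}$ of $K$ are loops based at the vertex $1$.

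Next I would take $K_0\subseteq K$ to be the subcomplex consisting of the vertex $1$, the loops $g_x,g_{x'}$, and these two $2$-cells. Then $K_0$ is connected, with $\pi_1(K_0)\cong\langle g_x,g_{x'}\mid g_x^{\ell_1},g_{x'}^{\ell_1}\rangle\cong\Z_{\ell_1}*\Z_{\ell_1}$, and $H_1(K_0)\cong\Z_{\ell_1}\oplus\Z_{\ell_1}$ is finite, so $H^1(K_0,\Z)=\{0\}$. By Lemma \ref{pres}, $M$ then has a connected summand $W$ with $\pi_1(W)\cong\Z_{\ell_1}*\Z_{\ell_1}$. On the other hand, by uniqueness of the prime decomposition $M=M_1\#M_2\#M_3$ into pairwise non-homeomorphic factors, $W$ is a connected sum of a subcollection of $M_1,M_2,M_3$ together with (possibly) homotopy spheres; since $\pi_1(W)\cong\Z_{\ell_1}*\Z_{\ell_1}$ and prime closed orientable $3$-manifolds have freely indecomposable fundamental group, uniqueness of free-product decompositions (Kurosh) forces $W$ to have exactly two prime summands with non-trivial fundamental group, each isomorphic to $\Z_{\ell_1}$. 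But these are also prime summands of $M$, and among $M_1,M_2,M_3$ only $M_1$ has a fundamental group of order $\ell_1$ --- $\pi_1(M_2)$ has order $\ell_2\neq\ell_1$, and $\pi_1(M_3)$ is perfect --- a contradiction.

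The step I expect to be the main obstacle is the translation carried out in the first paragraph: one must verify carefully that a Scharlemann disk of $G_1$ genuinely descends to a single $2$-cell of $K$ whose boundary word is exactly $g_x^{\ell_1}$. This is where Lemma \ref{disjoint} (to exclude edges of $G_2$ inside $\Delta$), Lemma \ref{samelength} (to pin down the length), and the constancy of vertex orientations around a Scharlemann cycle all have to be invoked together. Once the configuration has been recast in group-theoretic terms, the remaining argument is a routine consequence of the uniqueness of the prime and free-product decompositions.
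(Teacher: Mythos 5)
Your proof is correct and its opening is essentially identical to the paper's: you pick two Scharlemann cycles in $G_1$ with distinct labels, use Lemma \ref{samelength} to pin down their length, use Lemma \ref{disjoint} to see that the two Scharlemann disks are genuine disk-regions of $G_Q$ and hence $2$-cells of $K$, and form the one-vertex subcomplex $K_0$ with loops $g_x,g_{x'}$ and these two $2$-cells, so that $\pi_1(K_0)\cong\Z_{\ell_1}*\Z_{\ell_1}$; you then apply Lemma \ref{pres} (and you correctly check $H^1(K_0,\Z)=0$, which the paper leaves implicit). The only divergence is the final contradiction. The paper observes that $\Z_{\ell_1}*\Z_{\ell_1}$ has weight $2$ (its abelianization $\Z_{\ell_1}\oplus\Z_{\ell_1}$ is non-cyclic) while a free factor of the weight-$1$ group $\pi_1(M)$ can have weight at most $1$; this is a one-line contradiction requiring no information about the individual $M_i$. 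You instead combine uniqueness of the prime decomposition of $M$ with uniqueness of the Grushko decomposition (free indecomposability of $\pi_1$ of prime $3$-manifolds) to force two of $M_1,M_2,M_3$ to have fundamental group $\Z_{\ell_1}$, and then rule this out using $\gcd(\ell_1,\ell_2)=1$ and perfectness of $\pi_1(M_3)$. Both routes are valid; the paper's is shorter and more self-contained, while yours is more explicit and does not rely on the weight-$1$ property of $\pi_1(M)$.
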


\begin{proof}
Let $C,C'$ be Scharlemann cycles in $G_1$, bounding Scharlemann disks
$\Delta,\Delta'$ respectively.  By Lemma \ref{disjoint}, $\Delta$ and
$\Delta'$ contain no edges of $G_2$, so are Scharlemann disks of $G_Q$.
By Lemma \ref{samelength} each of $C,C'$ has length $\ell_1$.  Suppose
that $C$ has label $x$ and $C'$ has label $y\ne x$.  Then $K$
has a subcomplex $K_0$ with one vertex $1$, two edges $g_x,g_y$
and two $2$-cells $\Delta,\Delta'$, so that
$$\pi_1(K_0)=\<g_x,g_y|g_x^{\ell_1}=g_y^{\ell_1}=1\>\cong\Z_{\ell_1}*\Z_{\ell_1}.$$
In particular, $\pi_1(K_0)$ has weight $2$, so cannot be isomorphic
to a free factor of $\pi_1(M)$, which contradicts Lemma \ref{pres}.
\end{proof}

\begin{definition}\label{sand}
A {\em sandwiched disk} in $\widehat{Q}$ is a disk $D\subset\widehat{Q}$
such that:
\begin{enumerate}
\item[(a)] $\partial D$ is the union of a subpath $a_1$ of a Scharlemann cycle
$C_1\subset G_1$ and a subpath $a_2$ of a Scharlemann cycle
$C_2\subset G_2$, with $a_1\cap a_2=\partial a_1=\partial a_2$;
\item[(b)] there are no vertices of $G_Q$ in the interior of $D$.
\end{enumerate}
\end{definition}

\begin{lemma}\label{sandw}
If $|\pi_1(M_1)|+|\pi_1(M_2)|>(q+2)/2$, then there exists
a sandwiched disk $D\subset \widehat{Q}$.
\end{lemma}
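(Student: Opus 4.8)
The plan is to work inside the sphere $\widehat{Q}$, which has $q$ fat vertices, and to compare two Scharlemann cycles, one from each graph, using Corollary \ref{samelabel} to control their labels. Let $C_1\subset G_1$ be a Scharlemann cycle with label $x_1$ and $C_2\subset G_2$ a Scharlemann cycle with label $x_2$; these exist by \cite[Proposition 2.8.1]{GL2}. By Lemma \ref{samelength} the lengths of $C_1,C_2$ are $\ell_1,\ell_2$ respectively, and by Lemma \ref{disjoint} the Scharlemann disk $\Delta_i$ bounded by $C_i$ contains no edges of the other graph, so $\Delta_1$ and $\Delta_2$ are disk regions of $G_Q$ itself. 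First I would record that, since each $C_i$ is a cycle in $\widehat{Q}$ visiting $\ell_i$ of the $q$ fat vertices, the cycles $C_1,C_2$ together visit at most $q$ vertices, and that $\partial\Delta_1$ and $\partial\Delta_2$, being embedded cycles on the sphere, each separate $\widehat{Q}$.

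The heart of the argument is a counting/position argument forcing $C_1$ and $C_2$ to share an edge or, more precisely, to bound between them a region with no fat vertex in its interior. The hypothesis $\ell_1+\ell_2>(q+2)/2$ says the two cycles are ``large'' relative to the total vertex supply; equivalently $2(\ell_1+\ell_2)>q+2$, so $C_1$ and $C_2$ cannot be routed through disjoint portions of $\widehat{Q}$ while each still bounding an innermost-type disk. The key step is: consider the Scharlemann disk $\Delta_1$ of $C_1$; by Lemma \ref{disjoint} its interior meets $G_2$ only in (possibly) fat vertices and closed curves, and on the other side of $\partial\Delta_1$ similarly for $\Delta_2$. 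Using that the labels of the $C_i$ are the single values $x_i$ (so all corners of $\Delta_i$ carry $g_{x_i}^{\pm1}$), and that between consecutive $G_1$-edges at a vertex of $C_1$ one sees an even, alternating block of $G_2$-labels (as in the proof of Lemma \ref{disjoint}), I would locate a fat vertex $v$ lying on both $C_1$ and $C_2$, then follow $C_1$ and $C_2$ outward from $v$ until they first re-meet at a vertex $v'$; the subpaths $a_1\subset C_1$, $a_2\subset C_2$ from $v$ to $v'$ bound a disk $D\subset\widehat{Q}$ with $a_1\cap a_2=\{v,v'\}=\partial a_1=\partial a_2$, giving condition (a). Condition (b) — no fat vertex of $G_Q$ interior to $D$ — is then forced by choosing $v'$ to be the \emph{first} re-meeting point along both cycles and by the inequality $2(\ell_1+\ell_2)>q+2$: if some fat vertex $w$ lay inside $D$, then the portions of $C_1$ and $C_2$ enclosed, together with the portions outside, would have to distribute more than $q$ vertex-incidences among only $q$ fat vertices in a way incompatible with $C_1,C_2$ each being embedded and each meeting each fat vertex at most once (a vertex has valence $2$ in a cycle, but the two cycles meeting it contribute a bounded local picture controlled by the corner labels), yielding a contradiction.

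The main obstacle I anticipate is making the final enclosure/counting step rigorous: one must pin down exactly how the embedded cycles $C_1$ and $C_2$ intersect at a common fat vertex (they interleave their edge-ends around $v$ in a pattern dictated by the corner labels and the parity rule), and then run an innermost-disk argument to extract $D$ with empty interior. A delicate point is ruling out the possibility that every disk cut off between arcs of $C_1$ and $C_2$ contains a fat vertex; this is precisely where the numerical hypothesis enters, via an Euler-characteristic or direct vertex-count bound on the subgraph of $\widehat{Q}$ spanned by $C_1\cup C_2$. I would set up the count by writing $\widehat{Q}=D\cup D'$ along $a_1\cup a_2$ extended to a full separating cycle, counting fat vertices on each side, and checking that $|{\rm int}(D)\cap V(G_Q)|\ge 1$ would force $\ell_1+\ell_2\le (q+2)/2$, contrary to hypothesis; closing this estimate carefully is the crux of the proof.
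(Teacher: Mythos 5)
Your proposal works only with a single Scharlemann cycle $C_1\subset G_1$ and a single $C_2\subset G_2$, and asks them to share a vertex. But all the vertices of a Scharlemann cycle have the same orientation, and there are $q/2$ vertices of each orientation. If $C_1$ and $C_2$ happen to have \emph{opposite} orientations, they are automatically disjoint and your plan never gets off the ground; the hypothesis $\ell_1+\ell_2>(q+2)/2$ does nothing to prevent this, since the cycles live on different halves of the vertex set. This is the main gap.

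The paper avoids this by first observing (via the results of Hoffman and Sayari cited in the proof) that there are at least \emph{two} Scharlemann cycles in each of $G_1$ and $G_2$, with the same label and hence pairwise disjoint vertex sets, and then exploiting the hypothesis in a sharper way: at least one $\ell_i$ exceeds $q/4$, which forces $G_1$ (say) to contain \emph{exactly} two Scharlemann cycles $C_1^+,C_1^-$, one of each orientation. Now every Scharlemann cycle in $G_2$ is guaranteed to meet one of them. The sandwiched disk is then extracted by an Euler characteristic count on the union $\Delta=\Delta_1^+\cup\Delta_1^-\cup\Delta_2\cup\Delta_2'$ of the four Scharlemann disks: this has two components, $2\ell_1+2\ell_2-t$ vertices, $2\ell_1+2\ell_2$ edges and four faces (where $t$ is the number of shared vertices), so $\widehat{Q}\smallsetminus\Delta$ has $t-1$ components, $t-2$ of which are disks, and contains only $q-2\ell_1-2\ell_2+t\le t-4$ vertices. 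A short parity argument on boundary circles then produces a disk component whose boundary meets the shared-vertex set in exactly two points and whose interior is vertex-free, which is a sandwiched disk by definition. Your closing paragraph gestures toward this kind of count, but applied to only two cycles it does not close: a single embedded separating circle $a_1\cup a_2$ in $\widehat{Q}$ gives no control on how the remaining $q-(\ell_1+\ell_2)$ vertices distribute between its two sides, and you yourself flag that step as unresolved. You would need to bring in the second Scharlemann cycle of each orientation to make the vertex count tight enough.
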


\begin{proof}
As observed in \cite[p. 551]{Hoff} and \cite[Lemma 6.1]{Sa2}, we know that there are at least
two Scharlemann cycles in $G_1$ -- necessarily
with disjoint sets of vertices, since they have the same label (Lemma \ref{samelabel}).  Similarly there
are at least two Scharlemann cycles in $G_2$ -- again with the same label and hence
with disjoint sets of vertices.

By hypothesis, at least one of $\ell_1=|\pi_1(M_1)|$, $\ell_2:=|\pi_1(M_2)|$
is greater than $q/4$.  Without loss of generality, assume that $\ell_1>q/4$.
Then $G_1$ must contain precisely two Scharlemann cycles, one of each
possible orientation.  Let us call them $C_1^+$ and $C_1^-$,
and let $\Delta_1^\pm$ denote the Scharlemann disks bounded by $C_1^\pm$.

Now let $C_2,C_2'$ denote two disjoint Scharlemann cycles in $G_2$,
and $\Delta_2,\Delta_2'$ the corresponding Scharlemann disks.
Since $\ell_1+\ell_2>q/2$, $C_2$ must intersect $C_1^+$ (if the vertices
of $C_2$ are positive) or $C_1^-$ (if the vertices of $C_2$ are negative).
On the other hand, consideration of vertex orientations shows that
$C_2$ cannot intersect both $C_1^+$ and $C_1^-$.  Similar remarks apply to
$C_2'$.

Now $(C_1^+\cup C_1^-)\cap(C_2\cup C_2')$ consists only of some number
($t$, say) of vertices.

Then $\Delta:=\Delta_1^+\cup \Delta_1^-\cup \Delta_2\cup \Delta_2'$ has precisely two components,
$2\ell_1+2\ell_2-t$ vertices, $2\ell_1+2\ell_2$ edges, and four $2$-cells.
The complement of $\Delta$ in $\widehat{Q}$ thus contains $t-1$ components,
one of which is an annulus and $t-2$ are disks.
But $\widehat{Q}\smallsetminus\Delta$ also contains precisely
$q-2\ell_1-2\ell_2+t$ vertices.  Since $2\ell_1+2\ell_2\ge q+4$, this
number is at most $t-4$.  Hence there are at least two
disk-components of $\widehat{Q}\smallsetminus\Delta$ that contain no vertices of 
$G_Q$.

Moreover, each vertex of $(C_1^+\cup C_1^-)\cap(C_2\cup C_2')$ appears
twice in $\partial(\widehat{Q}\smallsetminus\Delta)$. Each occurrence separates an arc of
$C_1^+\cup C_1^-$ from an arc of $C_c\cup C_2'$ in $\partial(\widehat{Q}\smallsetminus\Delta)$,
so each component of $\partial(\widehat{Q}\smallsetminus\Delta)$ contains an even number of vertices
of $(C_1^+\cup C_1^-)\cap(C_2\cup C_2')$.

The number of boundary components of $\widehat{Q}\smallsetminus\Delta$
is precisely $t$.  If the vertices in $C_2$ and those in $C_2'$ have the same
orientation, then one of $C_1^+,C_1^-$ is a boundary component of $\widehat{Q}\smallsetminus\Delta$
containing no vertices of $(C_1^+\cup C_1^-)\cap(C_2\cup C_2')$.
With that exception, each boundary component of $\widehat{Q}\smallsetminus\Delta$
contains at least at least $2$ vertices of $(C_1^+\cup C_1^-)\cap(C_2\cup C_2')$.

It follows that there is at
least one disk component $D$ of
$\widehat{Q}\smallsetminus\Delta$ whose boundary contains precisely two vertices of
$(C_1^+\cup C_1^-)\cap(C_2\cup C_2')$ and whose interior contains no vertices of $G_Q$.

Any such $D$ is, by definition, a sandwiched disk.
\end{proof}

\section{Analysis of sandwiched disks}\label{sd}

In this section we complete the proof of our upper bound on $|r|$
by showing that sandwiched disks do not exist.  This result holds with
no assumptions on $\ell_1$ or $\ell_2$, so may have wider applications.

We assume throughout that $G_1$, $G_2$ contain Scharlemann cycles of length
$\ell_1,\ell_2$ respectively, with labels $x_1,x_2$ respectively.

\begin{lemma}\label{pover2}
Let $D$ be a sandwiched disk with $\partial D=a_1\cup a_2$,
where $a_1,a_2$ are sub-paths of Scharlemann cycles in $G_1,G_2$ respectively.
Then no two consecutive vertices of $a_1$ (or of $a_2$)
are joined by $p/2$ edges
in $G_Q$.
\end{lemma}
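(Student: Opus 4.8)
The plan is to argue by contradiction with Lemma \ref{bigon}. Suppose some two consecutive vertices $v,w$ of $a_1$ are joined by $p/2$ edges of $G_Q$. Let $e$ be the edge of the Scharlemann cycle $C_1$ joining $v$ and $w$; it lies on $a_1\subset\partial D$, and by the remark following Lemma \ref{disjoint} its two labels are consecutive modulo $p$, say $x_1$ at $v$ and $x_1+1$ at $w$. Since $C_1$ is a Scharlemann cycle, $v$ and $w$ have the same orientation, which we take to be positive. Let $e'$ be the other edge of $C_1$ at $v$; its label at $v$ is $x_1+1$, and (when $\ell_1>2$) it runs to a vertex of $C_1$ different from $w$. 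The corner of the Scharlemann disk $\Delta_1$ at $v$ carries the label $g_{x_1}$ and lies in the sector between $e$ and $e'$; since $\Delta_1$ contains no edges of $G_Q$ at all (Lemma \ref{disjoint}), every other edge--end at $v$ lies in the complementary sector, on the side of $e$ away from $e'$.

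The structural core of the proof is to show that the $p/2$ edges joining $v$ to $w$ form a single parallel family $e=e_0,e_1,\dots,e_{p/2-1}$, occurring consecutively in the cyclic order at $v$ with labels $x_1,x_1-1,\dots,x_1-(p/2-1)$ there (so in particular $e_1\ne e'$). Contiguity is the point at which the hypothesis that $D$ is a \emph{sandwiched} disk must be used: a fat vertex lying between two of these edges in the cyclic order at $v$ would be trapped in the interior of a disk cobounded by those two edges together with arcs of $\partial v$ and $\partial w$, and one must rule this out using that the $v$--$w$ edges all lie in $\overline D$, that $\mathrm{int}(D)$ contains no fat vertices (Definition \ref{sand}(b)), and the minimality of $P_1,P_2$. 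Granting contiguity, the arcs of $\partial v$ and of $\partial w$ appearing on the boundary of the region $R_j$ between $e_{j-1}$ and $e_j$ carry no edge--ends, so $R_j$ contains no edge of $G_Q$ unless it contains a fat vertex; the latter is excluded as before, so each $R_j$ ($1\le j\le p/2-1$) is a genuine $2$--gonal region of $G_Q$. (The degenerate case $\ell_1=2$ is disposed of separately, using the coprimality of $\ell_1$ and $\ell_2$ exactly as in the proof of Lemma \ref{disjoint}.)

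The conclusion is then a short label computation. Because $v$ and $w$ are both positive and $e$ has labels $x_1,x_1+1$, the standard labelling rule for a parallel family between two like--oriented vertices gives that the edge with label $y$ at $v$ has label $2x_1+1-y$ at $w$; hence $e_j$ has label $x_1-j$ at $v$ and $x_1+j+1$ at $w$. Therefore the corner of $R_j$ at $v$ carries the label $g_{x_1-j}$ and the corner at $w$ carries the label $g_{x_1+j}$. So for every $j\in\{1,\dots,p/2-1\}=\{1,\dots,(p-2)/2\}$ there is a $2$--gonal region of $G_Q$ with corners $g_{x_1+j}$ and $g_{x_1-j}$, which directly contradicts Lemma \ref{bigon} applied with $x=x_1$. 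The statement for $a_2$ follows verbatim, replacing $G_1,C_1,x_1,\ell_1$ by $G_2,C_2,x_2,\ell_2$.

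I expect the second paragraph to be the real obstacle: once the $p/2$ edges are known to form a contiguous parallel family with vertex--free interstices, the rest is bookkeeping with labels plus a single invocation of Lemma \ref{bigon}. Pinning down that structural claim --- reconciling the global combinatorics of $G_Q$ on the sphere $\widehat Q$ with the local hypothesis that $D$ is sandwiched with no interior fat vertices --- is presumably exactly why the notion of a sandwiched disk, rather than merely a pair of Scharlemann cycles, is built into the statement.
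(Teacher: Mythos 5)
Your proposal follows essentially the same route as the paper: establish that the $p/2$ edges joining the consecutive vertices form a contiguous parallel family, compute the corner labels to obtain $2$-gons with corners $g_{x_1+j}$ and $g_{x_1-j}$ for every $j\in\{1,\dots,(p-2)/2\}$, and invoke Lemma~\ref{bigon}. The paper compresses all of this into a single sentence, so you are supplying details the author left implicit; your label bookkeeping and your reading of where the sandwiched-disk hypothesis enters (to preclude fat vertices or stray edges between consecutive members of the family) are correct, and the formula $y\mapsto 2x_1+1-y$ for the label at $w$ is the right one.

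The one place where your write-up is thin is the degenerate case $\ell_1=2$. You flag it, correctly observing that when $\ell_1=2$ the second $C_1$-edge $e'$ at $v$ also runs to $w$, so $e_0$ need no longer sit at one \emph{end} of the parallel family; the family would then straddle the $g_{x_1}$ corner, the interstice between $e_0$ and $e'$ would be $\Delta_1$ itself (a $g_{x_1},g_{x_1}$ bigon, not of the required form), and the family bigons would fail to exhaust the full range $j=1,\dots,(p-2)/2$. But the parenthetical resolution you offer --- ``using the coprimality of $\ell_1,\ell_2$ exactly as in Lemma~\ref{disjoint}'' --- is not substantiated: in Lemma~\ref{disjoint} the coprimality contradiction arises because one manufactures a length-$2$ Scharlemann cycle of $G_{2}$ inside a Scharlemann disk of $G_1$, and there is no analogous $G_2$-Scharlemann cycle produced by the present configuration. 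Moreover, when $\ell_1=2$ the edge $e'$ lies on $\partial\Delta_1$, outside $\overline D$, so the contiguity argument (which rests on Definition~\ref{sand}(b), a condition on $\mathrm{int}(D)$ only) does not straightforwardly apply to a family containing $e'$. A clean way out is to observe that when $v$ or $w$ is an endpoint of $a_1$ one should restrict attention to the edges of the family lying in $\overline D$ --- which is in fact how the paper uses the lemma in Corollary~\ref{cor2} --- and to note that $e'$ is then excluded; but as written your parenthetical does not do this work, and this is a genuine (if small) gap relative to the rigour of the rest of your argument.
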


\begin{proof}
Suppose that two vertices of (say) $a_1$ are joined by
$p/2$ edges.  Then there are $2$-gonal regions $D_i$ in $G_Q\cap D$
such that the corner labels of $D_i$ are $g_{x_1+i}$ and $g_{x_1-i}$.
This contradicts Lemma \ref{bigon}.
\end{proof}

\begin{corollary}\label{cor1}
Let $D,a_1,a_2$ be as in Lemma \ref{pover2}.  If two vertices
of $a_1$ (or of $a_2$) are connected by an edge in $G_Q$, then they are
consecutive vertices of $a_1$ (respectively, of $a_2$).
\end{corollary}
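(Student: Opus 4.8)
The plan is to argue by contradiction in the style of Lemma~\ref{pover2}: starting from a violating pair of vertices of $a_1$, I will produce two \emph{consecutive} vertices of $a_1$ joined by at least $p/2$ parallel edges of $G_Q$, and then read off from the resulting $2$-gonal regions, for every $i\in\{1,\dots,(p-2)/2\}$, a $2$-gon with corners $g_{x_1+i}$ and $g_{x_1-i}$ (or $g_{x_1+i}^{-1}$ and $g_{x_1-i}^{-1}$), contradicting Lemma~\ref{bigon}. By symmetry it suffices to treat $a_1$.

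So suppose some pair $u,w$ of vertices of $a_1$ is joined by an edge of $G_Q$ but $u,w$ are not consecutive in $a_1$, and among all such pairs choose one for which the subpath $\gamma\subset a_1$ from $u$ to $w$ is shortest. Then $\gamma$ has length at least $2$ and so contains a vertex $z$ in its interior; in particular $z$ is an interior vertex of $a_1$, so its two $C_1$-edges are the edges of $\gamma$ at $z$, running to the two neighbours of $z$ in $a_1$. The key step is the claim that \emph{all} $p$ edges of $G_Q$ at $z$ run to these two neighbours. First, since $r$ is an integer, each component of $\partial P$ meets each component of $\partial Q$ exactly once, so the $p$ labels occur exactly once each among the edge-ends at any fat vertex and hence $z$ has exactly $p$ incident edges. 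Two of these are the $C_1$-edges just described; because the Scharlemann disk $\Delta_1$ is a $2$-cell of $G_Q$ (Lemma~\ref{disjoint}) whose only corner at $z$ is $g_{x_1}^{\pm1}$, these two are the only edges of $G_Q$ on the $\Delta_1$-side of $z$. The remaining $p-2$ edges at $z$ therefore lie on the opposite side of $a_1$, inside the disk $\delta$ bounded by $\gamma$ together with the edge $uw$; as $\delta$ contains no vertex of $G_Q$ in its interior, each of these edges ends at a vertex of $\gamma$. If one of them ended at a vertex of $\gamma$ other than a neighbour of $z$, then that vertex and $z$ would be a non-consecutive pair of $a_1$ joined by an edge of $G_Q$ and spanning a strictly shorter subpath of $a_1$ than $\gamma$, contradicting minimality. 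This proves the claim.

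Thus the $p$ edges at $z$ are split between its two $a_1$-neighbours, so by the pigeonhole principle one neighbour $z'$ is joined to $z$ by $a\ge p/2$ edges. These $a$ edges form a parallel family, and among them is the $C_1$-edge $zz'$, which carries the labels $x_1$ and $x_1+1$; together with the $g_{x_1}^{\pm1}$-corners of $\Delta_1$ at $z$ and at $z'$ this pins down the cyclic position of the whole family, so exactly as in the proof of Lemma~\ref{pover2} the $i$-th of the $a-1$ successive $2$-gons of the family has corners $\{g_{x_1+i}^{\pm1},g_{x_1-i}^{\pm1}\}$ for $i=1,\dots,a-1$. Since $a-1\ge (p-2)/2$, every pair required in Lemma~\ref{bigon} (taken with $x=x_1$) is realised, which is the desired contradiction. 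Applying the same argument to $a_2$ and $G_2$ finishes the proof.

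The step I expect to need the most care is the middle one: justifying that the edge $uw$ cuts off a sub-disk $\delta$ with no interior vertices of $G_Q$ and with $\partial\delta\cap a_1=\gamma$. This is immediate when $u,w$ are interior to $a_1$ and $uw$ lies on the sandwiched-disk side of $a_1$; the cases in which $u$ or $w$ is an endpoint of $a_1$ (hence also a vertex of $a_2$), in which $uw$ lies on the $\Delta_1$-side of $a_1$, or in which $u$ and $w$ are already adjacent in $C_1$ along an edge not contained in $a_1$, must be disposed of separately, in each instance re-establishing the trapping claim at $z$ from the local picture at $z$ together with the fact that $\Delta_1$ carries neither edges nor interior vertices. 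The combinatorial heart of the argument — the $2$-gon/label count contradicting Lemma~\ref{bigon} — is unaffected by these reductions.
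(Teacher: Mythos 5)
Your proof is correct and follows essentially the same approach as the paper's: pick a minimal offending pair, locate an interior vertex $z$ of the spanning subpath, argue that all $p$ edges at $z$ are trapped among its two $a_1$-neighbours, and derive a contradiction from edge-counting. The one stylistic difference is that where you apply pigeonhole to get $\ge p/2$ parallel edges to one neighbour and then re-run the $2$-gon/label computation against Lemma~\ref{bigon}, the paper simply invokes Lemma~\ref{pover2} directly (fewer than $p/2$ to each neighbour, so fewer than $p$ in total, contradicting valence $p$), which is shorter and avoids repeating work already done; the edge cases you flag at the end are handled at the same (implicit) level of detail in the paper.
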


\begin{proof}
Let $w_0,\dots,w_t$ be the vertices of $a_1$, in order.  Suppose that
$w_i,w_j$ are joined by an edge in $G_Q$, where $j>i+1$, and that $j-i$
is minimal for such pairs of vertices.  Then $w_{i+1}$ has precisely two
neighbours in $G_Q$: $w_i$ and $w_{i+2}$.  By Lemma \ref{pover2} it is connected
to each by fewer than $p/2$ edges, contradicting the fact that it has
valence $p$.
\end{proof}

\begin{corollary}\label{cor2}
Let $D,a_1,a_2$ be as in Lemma \ref{pover2}.
Each of $a_1,a_2$ has length greater than $1$, and each interior
vertex of $a_1$ (respectively $a_2$) is joined to an interior
vertex of $a_2$ (respectively $a_1$) by an edge of $G_Q\cap D$.
\end{corollary}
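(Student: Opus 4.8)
The plan is to derive both assertions from a single valence count, using the fact that every vertex of $G_Q$ has valence exactly $p$. Fix a sandwiched disk $D$ with $\partial D=a_1\cup a_2$ as in Definition \ref{sand}; write $C_i$ for the Scharlemann cycle of $G_i$ containing $a_i$ and $\Delta_i$ for the corresponding Scharlemann disk, so that every corner of $\Delta_i$ is $g_{x_i}^{\pm 1}$. Suppose $a_1$ has length at least $2$ and let $w$ be an interior vertex of $a_1$, with neighbours $w^-,w^+$ along $a_1$. The two edges of $a_1$ at $w$ are the two edges of $C_1$ at $w$, so they carry the consecutive labels $x_1,x_1+1$ there; being adjacent in the cyclic order of labels around $w$, they bound the corner of $\Delta_1$ at $w$, which contains no other edge-ends. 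Moreover $D$ lies on the side of $a_1$ opposite to $\Delta_1$: otherwise $D$ would contain the region of $\widehat{Q}\smallsetminus G_Q$ on the $\Delta_1$-side of an edge of $a_1$, which by Lemma \ref{disjoint} is $\Delta_1$ itself, and chasing the edges of $C_1\smallsetminus a_1$ with Corollary \ref{cor1} then leads to a contradiction of the same type as in the bigon case below. Hence the remaining $p-2$ edge-ends at $w$ all run into $D$, and since $D$ has no interior vertices they join $w$ to vertices of $\partial D=a_1\cup a_2$.

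Now the valence count. By Corollary \ref{cor1} every edge of $G_Q$ joining $w$ to a vertex of $a_1$ joins it to $w^-$ or $w^+$, and by Lemma \ref{pover2} (as in the proof of Corollary \ref{cor1}) $w$ is joined to each of $w^-,w^+$ by fewer than $p/2$ edges; so fewer than $p$ of the $p$ edges at $w$ reach vertices of $a_1$. Therefore at least one edge at $w$ joins $w$ to a vertex of $\partial D$ that does not lie on $a_1$, i.e.\ to an interior vertex of $a_2$; this proves the second assertion at $w$, provided $a_2$ has an interior vertex. The same argument applies with $a_1$ and $a_2$ interchanged.

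It remains to show that $a_1$ and $a_2$ each have length greater than $1$. If $a_1$ had length $1$ but $a_2$ length at least $2$, the valence count applied to an interior vertex of $a_2$ would force an edge running to a vertex of $\partial D$ off $a_2$; but $\partial D=a_1\cup a_2$ and the two vertices of the single edge $a_1$ are the endpoints of $\partial D$, hence lie on $a_2$ — a contradiction. The only remaining case is that $D$ is a bigon bounded by an edge $e_1$ of $C_1$ and an edge $e_2$ of $C_2$. Its two endpoints are vertices of both $C_1$ and $C_2$, hence share an orientation; at each of them the two $C_1$-edges are consecutive, the two $C_2$-edges are consecutive, and the Scharlemann corners of $\Delta_1$ and $\Delta_2$ lie outside $D$. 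Reading off the labels of the edge-ends around the two endpoints, one finds successively that $D$ is not empty, that the edges of $G_Q$ inside $D$ form a fan of parallel arcs joining the two endpoints, and that $p$ is even with $x_2\equiv x_1+p/2\pmod p$; the fan then divides $D$ into $p/2-1$ consecutive $2$-gonal regions whose two corners are $g_{x_1+j}$ and $g_{x_1-j}$ for $j=1,\dots,p/2-1$. Since $(p-2)/2=p/2-1$, this realizes every corner-pair that Lemma \ref{bigon} (with $x=x_1$) forbids — the desired contradiction. Hence $a_1$ and $a_2$ both have length greater than $1$, and the second assertion then follows from the valence count of the previous paragraph.

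I expect the bigon case to be the main obstacle: the label bookkeeping around the two endpoints of $\partial D$ must be carried out carefully enough to pin down the fan of interior arcs and to show that the resulting $2$-gonal regions exhaust the corner-pairs $g_{x_1\pm j}$, all while keeping the orientation conventions straight so that Lemma \ref{bigon} applies cleanly (and small values of $p$ may need a glance). A secondary technical point is the verification that $D$ lies opposite the relevant Scharlemann disk; this is routine given Lemma \ref{disjoint}, but it is exactly what ensures the valence count sees all $p-2$ non-boundary edge-ends at an interior vertex.
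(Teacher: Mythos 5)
Your argument follows the paper's own proof in all essentials: a valence count at an interior vertex of $a_1$ (using Lemma \ref{pover2} and Corollary \ref{cor1}) produces the required edge into the interior of $a_2$, and the case where both paths have length $1$ is ruled out by exhibiting a fan of $p/2$ parallel arcs, contradicting Lemma \ref{pover2} (you invoke Lemma \ref{bigon} directly, which is the same content one step earlier in the chain). You are more explicit than the paper in two places: you treat the mixed case (one path of length $1$, the other longer) separately rather than implicitly, and you raise --- though do not fully settle (``leads to a contradiction of the same type'' is a gesture, not a proof) --- the need to verify that $D$ lies on the side of $a_1$ away from the Scharlemann disk $\Delta_1$, a point the paper's proof uses silently in the sentence asserting that all edges of $G_Q$ incident at $w$ lie in $D$. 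That side issue is in any case moot in the application, since the sandwiched disk produced by Lemma \ref{sandw} is by construction a component of the complement of the Scharlemann disks and so automatically lies on the correct side.
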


\begin{proof}
If $a_1,a_2$ both have length $1$, then every edge of $G_Q\cap D$
joins the two common endpoints $u,v$ of $a_1$ and $a_2$.
Without loss of generality, the edges of $G_Q\cap D$ incident at $u$
have labels $x_1+1,x_1+2,\dots,x_2$, while those incident at
$v$ have labels $x_2+1,x_2+2,\dots,x_1$.
Hence $|x_1-x_2|=p/2$, and
$D$ contains precisely $p/2$
arcs joining $u$ to $v$.
But this contradicts Lemma \ref{pover2}.

If $w$ is an interior vertex of (say) $a_1$, then $w$ has two
neighbours in $\partial D$.  It is joined to each of these by strictly
fewer than $p/2$ arcs, by Lemma \ref{pover2}, and hence is also joined to a third
vertex in $G_Q$.  Since all the edges of $G_Q$ incident at $w$ are
contained in $D$, this third vertex is also in $\partial D$.
By Corollary \ref{cor1} it cannot be a vertex of $a_1$,
so it must be an interior vertex of $a_2$.
\end{proof}

\begin{lemma}\label{ycycles}
Let $D$ be a sandwiched disk in $G_Q$.
Then there are no Scharlemann cycles in $G_Q\cap D$.
\end{lemma}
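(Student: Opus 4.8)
The plan is to suppose, for contradiction, that there is a Scharlemann cycle $C$ in the subgraph $G_Q\cap D$, where $D$ is a sandwiched disk with $\partial D=a_1\cup a_2$ and $a_i$ a sub-path of a Scharlemann cycle in $G_i$. First I would observe that since $a_1\subset G_1$ and $a_2\subset G_2$ use consecutive label pairs $\{x_1,x_1+1\}$ and $\{x_2,x_2+1\}$ respectively, and since all edges of $G_Q\cap D$ incident at an interior vertex of $a_i$ lie in $D$, the graph $G_Q\cap D$ has a very restricted structure: by Corollary \ref{cor2} each interior vertex of $a_1$ is joined only to vertices of $a_2$ (and its two $a_1$-neighbours), and vice versa, so $G_Q\cap D$ is essentially bipartite between the interiors of $a_1$ and $a_2$ apart from the boundary edges along $a_1$ and $a_2$ themselves. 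The Scharlemann cycle $C$ must bound a disk region $\Delta_C$ of $\widehat Q\smallsetminus G_Q$, all of whose corners carry the same label $g_z^{\pm1}$; in particular $\Delta_C\subset D$ and all vertices of $C$ have a common orientation.

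Next I would analyse what such a $C$ can look like. Its edges use a consecutive label pair $\{z,z+1\}$. If both endpoints of every edge of $C$ lay on $a_1$, then $C$ would be a Scharlemann cycle among vertices of $a_1$; but by Corollary \ref{cor1} any two vertices of $a_1$ joined by an edge of $G_Q$ are consecutive, forcing $C$ to be a bigon between two consecutive vertices $w_i,w_{i+1}$ of $a_1$ — and then the label pair $\{z,z+1\}$ would have to equal $\{x_1,x_1+1\}$ (the labels already present on the $a_1$-edge between $w_i$ and $w_{i+1}$), which together with Lemma \ref{pover2} and Lemma \ref{bigon} (applied as in Lemma \ref{pover2}) gives a contradiction. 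The same applies with $a_2$ in place of $a_1$. So $C$ must use at least one edge running between $a_1$ and $a_2$. But the edges of $C$ all carry the single consecutive label pair $\{z,z+1\}$, whereas an edge of $G_Q\cap D$ joining an interior vertex of $a_1$ to an interior vertex of $a_2$ has one label in $\{x_1+1,\dots,x_2\}$ and the other in the complementary arc $\{x_2+1,\dots,x_1\}$ of labels; tracking these labels around $C$ and around the vertices it passes through, I would show that the cyclic-order constraint on labels (the standard fact that labels appear in the fixed cyclic order $1,\dots,p$ around each fat vertex) is incompatible with every edge of $C$ bearing the same label pair unless $C$ degenerates to one of the already-excluded cases. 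Equivalently, one can run a counting/parity argument: since $P_1$ and $P_2$ are separating, an edge of $C$ with both ends labelled from $\{z,z+1\}$ forces $z-(z+1)$ odd (automatic) but also forces all vertices of $C$ into one of $a_1,a_2$, contradicting the previous paragraph.

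An alternative, perhaps cleaner, route is to use Lemma \ref{pres} and Corollary \ref{3lens} directly. A Scharlemann cycle $C$ in $G_Q\cap D$ with label $z$ has length some $\ell$ and contributes the relation $g_z^{\ell}=1$. Together with the two Scharlemann disks $\Delta_1^{\pm}$ (or whichever Scharlemann disks of $G_1,G_2$ contain $a_1,a_2$) giving relations $g_{x_1}^{\ell_1}=1$ and $g_{x_2}^{\ell_2}=1$, one would try to build a subcomplex $K_0$ of $K$ with three vertices-worth of free-product structure, namely $\pi_1(K_0)\cong \Z_{\ell_1}*\Z_{\ell_2}*\Z_{\ell}$ (after checking $H^1(K_0,\Z)=0$, which follows because each $2$-cell is attached along a proper power of a single generator, so the presentation complex collapses to a wedge of the three cyclic-group presentation complexes). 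Since $x_1,x_2,z$ are pairwise distinct — $x_1\ne x_2$ because $a_1,a_2$ are sub-paths of Scharlemann cycles in different graphs with different label pairs, and $z\notin\{x_1,x_2\}$ by the bigon analysis above — the three generators are distinct, so this is genuinely a free product of three finite cyclic groups, contradicting Corollary \ref{3lens}.

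The main obstacle I expect is the bookkeeping needed to pin down the label $z$ of the putative Scharlemann cycle $C$ and to verify that $z\notin\{x_1,x_2\}$, i.e. that $C$ cannot ``reuse'' one of the two existing label pairs by sitting as a bigon along $a_1$ or $a_2$; handling that case requires combining Corollary \ref{cor1} (edges within $a_i$ join only consecutive vertices) with Lemma \ref{pover2}/Lemma \ref{bigon} to rule out the corresponding $2$-gonal regions. A secondary subtlety is confirming $H^1(K_0,\Z)=0$ for the subcomplex assembled in the last approach — but since every relator is a proper power of a single generator, the abelianization of $\pi_1(K_0)$ is a direct sum of finite cyclic groups and hence $H^1=\mathrm{Hom}(H_1,\Z)=0$, so Lemma \ref{pres} and then Corollary \ref{3lens} apply to finish the proof.
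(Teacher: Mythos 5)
Your proposal misses the two observations that do essentially all the work in the paper's proof, and as a result both of your suggested routes have genuine gaps.

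First, the paper invokes Corollary~\ref{samelabel} at the very start: any Scharlemann cycle $C$ in $G_Q\cap D$ lies in $G_1$ or $G_2$, hence has label $x_1$ or $x_2$ respectively. You never use this, and in fact your second route requires the opposite, namely $z\notin\{x_1,x_2\}$. You assert that this follows from your ``bigon analysis,'' but Corollary~\ref{samelabel} shows it is always false, so the proposed subcomplex with $\pi_1\cong\Z_{\ell_1}*\Z_{\ell_2}*\Z_\ell$ collapses to a free product of only two cyclic groups (since $g_z$ coincides with $g_{x_1}$ or $g_{x_2}$), and Corollary~\ref{3lens} gives no contradiction. That route is a dead end.

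Second, the paper uses a short geometric observation you do not mention: the corner labelled $g_{x_2}$ at any vertex of $a_2$ lies in $\Delta_2$, \emph{outside} $D$, whereas the corresponding corner at any vertex of $C$ (with label $x_2$) lies in the Scharlemann disk of $C$, \emph{inside} $D$. This instantly places all vertices of $C$ in the interior of $a_1$. Your attempt to reach the same kind of conclusion goes wrong: you claim that if $C$ were a bigon on $a_1$ then its label pair would ``have to equal $\{x_1,x_1+1\}$ (the labels already present on the $a_1$-edge),'' but the label of $C$ is an independent piece of data, and in fact the corner argument above forces it to be $x_2$, not $x_1$, precisely because the $g_{x_1}$-corners at $a_1$-vertices are outside $D$. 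With that claim wrong, the chain of reasoning that is supposed to rule out the ``all on $a_1$'' case and then leave only the ``mixed'' case does not hold up, and the mixed case is handled only by an unproved assertion about ``tracking labels'' being ``incompatible.'' In the paper, once the vertices of $C$ are known to lie on $a_1$, Corollary~\ref{cor1} forces $C$ to be a bigon between consecutive vertices $v_1,v_2$, and a short count of labels on the edges joining $v_1$ and $v_2$ (using the position of the $a_1$-edge with labels $\{x_1,x_1+1\}$ and the $C$-edges with labels $\{x_2,x_2+1\}$) shows they are joined by at least $p/2$ edges, contradicting Lemma~\ref{pover2}. You would need to supply the missing invocation of Corollary~\ref{samelabel}, the ``corner outside $D$'' observation, and the final label-count to have a correct proof.
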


\begin{proof}
Any Scharlemann cycle $C$ in $G_Q\cap D$ is a Scharlemann cycle in $G_1$ or in
$G_2$, so has label $x_1$ or $x_2$ by Lemma \ref{samelabel}.  Assume without loss of generality that
$C$ has label $x_2$.  For any vertex $v$ of $a_2$, the corner labelled $g_{x_2}$
does not lie in $D$, so the vertices of $C$ are interior vertices of $a_1$.

By Corollary \ref{cor1}, the vertices of $C$ must be pairwise consecutive
vertices of $a_1$, and hence $C$ has length $2$.  Moreover, if $v_1,v_2$ are the
vertices of $C$, then $v_1,v_2$ are connected by edges labelled $x_1+1,\dots x_2$
at one end (say the $v_1$ end), and by edges labelled $x_2+1,\dots x_1$ at the
other ($v_2$) end.  In particular, they are joined by at least $p/2$ edges, contradicting
Lemma \ref{pover2}.
\end{proof}

\begin{corollary}\label{cor3}
If there is a sandwiched disk $D$ in $G_Q$ such that $\partial D=a_1\cup a_2$
where $a_i$ is a subpath of a Scharlemann cycle with label $x_i$, then
$|x_1-x_2|=p/2$.
\end{corollary}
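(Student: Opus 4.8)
The plan is to analyze the structure of $G_Q \cap D$ for a sandwiched disk $D$ with $\partial D = a_1 \cup a_2$ and derive the equality $|x_1 - x_2| = p/2$ from the valence and labelling constraints, now that Lemma~\ref{ycycles} has ruled out all Scharlemann cycles inside $D$. By Corollary~\ref{cor2}, both $a_1$ and $a_2$ have length $> 1$, so they have interior vertices, and each interior vertex of $a_1$ is joined by an edge of $G_Q \cap D$ to an interior vertex of $a_2$, and vice versa. First I would set up the combinatorics: label the vertices of $a_1$ as $w_0, \dots, w_s$ and those of $a_2$ as $u_0, \dots, u_m$ with $w_0 = u_0$ and $w_s = u_m$ the two common endpoints, and use Corollary~\ref{cor1} to record that no two non-consecutive vertices of $a_1$ (nor of $a_2$) are joined by an edge. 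Thus every edge of $G_Q \cap D$ either joins consecutive vertices of $a_1$, or consecutive vertices of $a_2$, or an interior vertex of one to an interior vertex of the other.

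Next I would chase the labels around $D$. At the endpoint $w_0 = u_0$, the edges of $G_Q \cap D$ incident there occupy a consecutive block of labels — starting just after $x_1$ (the corner $g_{x_1}$ of the $G_1$-Scharlemann cycle is the ``outside'' corner at this vertex, not in $D$) and ending just before the corresponding label for $a_2$; similarly at $w_s = u_m$. The key point is that at each \emph{interior} vertex $w_i$ of $a_1$, the full valence $p$ is used up by: the $< p/2$ edges to $w_{i-1}$, the $< p/2$ edges to $w_{i+1}$ (both bounds from Lemma~\ref{pover2}), and the remaining edges going to interior vertices of $a_2$. Tracking how the labels at $w_i$ partition into these groups, and using the fact that the labels on the edges of the Scharlemann cycle $C_1$ at $w_i$ are $x_1$ and $x_1 + 1$ (so the corner label $g_{x_1}$ sits between them, outside $D$), I would show that the ``$D$-side'' of $w_i$ carries labels running consecutively from $x_1 + 1$ around to $x_1$, i.e. all of $\{1,\dots,p\}$ minus possibly nothing — essentially the whole valence. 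The analogous statement holds on $a_2$ with $x_2$ in place of $x_1$. Comparing the two consecutive label-blocks that meet along the edges joining $a_1$ to $a_2$, and counting, forces the offset between $x_1$ and $x_2$ to be exactly half of $p$: the labels $x_1 + 1, \dots, x_2$ appearing at one endpoint and $x_2 + 1, \dots, x_1$ at the other must each be a block of size $p/2$, exactly as in the length-one case treated inside the proof of Corollary~\ref{cor2}, but now reached via the interior-vertex analysis rather than being assumed.

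The main obstacle I expect is the bookkeeping of label blocks at the junction between $a_1$ and $a_2$: one must rule out the possibility that the ``crossover'' edges (interior vertex of $a_1$ to interior vertex of $a_2$) are distributed in a lopsided way that would make $|x_1 - x_2| \ne p/2$. Here the cleanest approach is probably to consider the innermost edge of $G_Q \cap D$ running between $a_1$ and $a_2$ — an edge cutting off a subdisk $D'$ of $D$ on one side that contains no interior vertices of either arc — and observe that this edge together with subpaths of $a_1$ and $a_2$ bounds a region all of whose corners are forced to be labelled $g_{x_1}$ or $g_{x_2}^{\pm 1}$; pushing the label count through this innermost region pins down the single edge's two labels as $x_i + j$ and $x_{3-i} + 1 - j$ type pairs, and summing around $\partial D$ gives $|x_1 - x_2| = p/2$. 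Alternatively, one can argue by a direct Euler-characteristic / edge-count on $G_Q \cap D$, using that $D$ has no interior vertices and that every $2$-gonal region is forbidden by Lemma~\ref{bigon} in the relevant range; I would present whichever of these is shorter, but I anticipate the innermost-edge argument to be the more transparent one.
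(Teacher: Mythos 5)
Your proposal and the paper's proof go by genuinely different routes, and as written yours has a gap at exactly the step you flag as ``the main obstacle.'' The paper's argument is short and decisive: normalise $x_1=p$ so that the edges of $G_Q\cap D$ incident at one endpoint $u$ of $a_1\cap a_2$ carry labels $1,\dots,x_2$ at $u$, while those incident at the other endpoint $v$ carry labels $x_2+1,\dots,p$ at $v$. If $x_2<p/2$, then $u$ meets only $x_2<p/2$ edges of $G_Q\cap D$, so among the more than $p/2$ labels in $\{x_2+1,\dots,p\}$ at least one label $y$ is not carried by either end of any edge of $G_Q\cap D$ incident at $u$. Removing $u$ and its incident edges leaves a subgraph $\Gamma\subset G_Q\cap D$ in which every vertex still has an incident edge labelled $y$ leading to another vertex of $\Gamma$; since all vertices of $D$ have the same orientation, $\Gamma$ contains a great $y$-cycle and therefore a Scharlemann cycle (CGLS, Lemma 2.6.2), contradicting Lemma~\ref{ycycles}. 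The case $x_2>p/2$ is symmetric, looking at $v$ in place of $u$.

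Your sketch correctly sets up the combinatorics (valence $p$ at interior vertices, the $D$-side label blocks at $u$ and $v$, the bounds from Lemma~\ref{pover2}, the role of Lemma~\ref{ycycles}), but then proposes two undelivered strategies. The ``innermost edge'' route as you describe it is not sound: the region cut off by the innermost $a_1$--$a_2$ crossover edge near $u$ has a corner at $u$ that is the whole $D$-side arc of corners there, not just $g_{x_1}$ or $g_{x_2}^{\pm1}$, and in any case that innermost-triangle analysis is essentially what the paper carries out in the proof of Theorem~\ref{nosand}, where it already \emph{uses} $|x_1-x_2|=p/2$ as input -- so following it here risks circularity. The Euler-characteristic alternative also does not obviously close: Lemma~\ref{bigon} forbids only one symmetric pair of $2$-gons per label, not $2$-gons wholesale, so the count you gesture at is not available. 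What is missing is the mechanism that converts ``the $D$-side block at $u$ is too short'' into a contradiction, namely the uncovered-label/great-$y$-cycle argument above. You identified the right tool (Lemma~\ref{ycycles}) but not the way to invoke it.
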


\begin{proof}

Let $a_1\cap a_2=\{u,v\}$.
Without loss of generality, $x_1=p$ and the edges of $G_Q\cap D$ meeting
$u$ are labelled $1,\dots x_2$ at $u$, while those meeting
$v$ are labelled $x_2+1,\dots,p$ at $v$.  If (say) $x_2<p/2$,
then there is a label $y$ with $x_2<y\le p$ such that $y$
does not appear as either label of any edge meeting $u$ that
is contained in $D$.  Consider the subgraph $\Gamma$ of
$G_Q\cap D$ that is obtained by removing $u$ and its incident
edges.  At each vertex of $\Gamma$, the edge labelled $y$
leads to another vertex of $\Gamma$.  Since all vertices of
$\Gamma$ are positive, it follows that $\Gamma$ contains a
great $y$-cycle, and hence a Scharlemann cycle by \cite[Lemma 2.6.2]{CGLS}.
This contradicts Lemma \ref{ycycles}.
\end{proof}

\begin{theorem}\label{nosand}
There are no sandwiched disks in $G_Q$.
\end{theorem}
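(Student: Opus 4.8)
The plan is to derive a contradiction from the existence of a sandwiched disk $D$ by combining the structural constraints accumulated in Corollaries \ref{cor1}--\ref{cor3} with a weight-element argument in the spirit of Lemma \ref{bigon}. By Corollary \ref{cor3} we may normalise so that $x_1=p$ and $x_2=p/2$, with $\partial D=a_1\cup a_2$ meeting in two vertices $u,v$; the edges of $G_Q\cap D$ at $u$ carry labels $1,\dots,p/2$ (at the $u$-end) and those at $v$ carry labels $p/2+1,\dots,p$ (at the $v$-end). By Corollary \ref{cor2} every interior vertex of $a_1$ is joined across $D$ to an interior vertex of $a_2$ and vice versa, and by Corollary \ref{cor1} no two non-consecutive vertices of $a_1$ (nor of $a_2$) are joined in $G_Q$; moreover by Lemma \ref{pover2} no two consecutive vertices of either path are joined by $p/2$ edges. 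These facts force a fairly rigid ``ladder'' pattern for the bipartite graph $G_Q\cap D$ between the interior vertices of $a_1$ and those of $a_2$.

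First I would analyse this ladder. Reading the labels around the boundary path $a_1$, each interior vertex $w$ of $a_1$ has valence $p$, split between its two neighbours along $a_1$ (with labels in a contiguous block, using the cyclic label convention around vertices of $G_Q$) and the block of edges going across $D$ to $a_2$. Because the labels around each fat vertex occur in the fixed cyclic order $1,\dots,p$, the across-edges at consecutive vertices of $a_1$ occupy ``complementary'' label-blocks, and Corollary \ref{cor2} guarantees the across-blocks are non-empty. Tracking these blocks from $u$ to $v$ along $a_1$, I expect to show that the across-edges, together with the short bigon regions they bound, produce in the $2$-complex $K$ (or directly in the path-groupoid $\Pi$) a family of relations of the form $g_{p/2+i}=g_{p/2-i}^{-1}$ (equivalently $g_{x_2+i}g_{x_2-i}=1$) for a full range $i=1,\dots,(p-2)/2$ of values. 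This is exactly the configuration ruled out by Lemma \ref{bigon}: it would make the weight element $W=g_1\cdots g_p$ conjugate to a word $g_{x_2}Ug_{x_1}U^{-1}$, hence trapped in the normal closure of two of the three free factors of $\pi_1(M)=\pi_1(M_1)*\pi_1(M_2)*\pi_1(M_3)$, contradicting that $W$ is a weight element.

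The main obstacle is the combinatorial bookkeeping in the middle step: verifying that the ladder between the interior vertices really does realise \emph{all} of the bigon relations $g_{x_2+i}=g_{x_2-i}^{-1}$ for $1\le i\le (p-2)/2$, rather than just some of them, and dealing with the boundary cases near $u$ and $v$ where the across-block may shrink to a single edge. One clean way to handle this is to run the great-$y$-cycle argument already used in Corollary \ref{cor3}: if some label $y$ in one of the contiguous label-blocks along $a_1$ fails to appear among the across-edges, then deleting the relevant path-vertex and following the $y$-edges (all vertices in the interior being of one orientation) produces a great $y$-cycle inside $D$, hence a Scharlemann cycle, contradicting Lemma \ref{ycycles}. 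Iterating this forces every label to be used, which is precisely what pins the relations $g_{x_2+i}=g_{x_2-i}^{-1}$ down for all $i$ and lets Lemma \ref{bigon} deliver the contradiction. Since a sandwiched disk exists whenever $\ell_1+\ell_2>(q+2)/2$ by Lemma \ref{sandw}, and $q\le 2b$, ruling out sandwiched disks yields $\ell_1+\ell_2\le (q+2)/2\le b+1$, completing the proof of Theorem \ref{main} and hence of Corollary \ref{maincor}.
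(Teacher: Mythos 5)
Your plan — force all $(p-2)/2$ bigon relations $g_{x_1+i}=g_{x_1-i}^{-1}$ to appear inside $D$ and then invoke Lemma~\ref{bigon} — has a genuine gap, and it is precisely Lemma~\ref{bigon} that points to it. In a sandwiched disk, the edges at the corner vertex $u$ split between the two adjacent boundary vertices $s_1\in a_1$ and $s_2\in a_2$: some number $i$ of them go to $s_1$ (giving bigons with corners $g_{x_1\pm j}$ for $j=1,\dots,i-1$) and the remaining $(p-2i)/2$ go to $s_2$ (giving bigons with corners $g_{x_1\pm j}$ for $j=i+1,\dots,(p-2)/2$). By Lemma~\ref{pover2} one must have $1\le i\le p/2-1$, so both groups are nonempty, and between them sits not a bigon but a \emph{triangular} region $\Delta_u$ with corners at $u$, $s_1$, $s_2$. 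The bigon for $j=i$ is therefore genuinely absent: its slot is occupied by $\Delta_u$, and Lemma~\ref{bigon} confirms it must be absent, since the other $(p-4)/2$ bigons are already there. Your proposed great-$y$-cycle fallback does not repair this, because $\Delta_u$ is not created by a label failing to appear at some vertex in the way needed for the Corollary~\ref{cor3} argument (all $p/2$ inward labels appear at $u$, and each of $s_1,s_2$ has full valence); there is nothing to iterate on.

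The actual contradiction in the paper comes from an entirely different direction. Once $\Delta_u$ is identified (and the symmetric triangle $\Delta_v$ at $v$, whose structure is in fact \emph{forced} by Lemma~\ref{bigon}), one reads off their corner words as $g_yg_z^2$ and $g_zg_y^2$ where $y=x_1+i$, $z=x_1-i$. Taking the subcomplex $K_0$ of $K$ with edges $g_{x_1},g_{x_2},g_y,g_z$ and $2$-cells the two Scharlemann disks $\Delta_1,\Delta_2$ together with $\Delta_u,\Delta_v$, Lemma~\ref{pres} produces a connected summand of $M$ with fundamental group $\Z_{\ell_1}*\Z_{\ell_2}*\Z_3$, contradicting Corollary~\ref{3lens}. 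So while your use of Corollaries~\ref{cor1}--\ref{cor3} and the initial normalisation matches the paper, the crux of the argument is the recognition of the two triangles and the ``three cyclic summands'' obstruction, not a second application of Lemma~\ref{bigon}.
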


\begin{proof}
We assume that there is a sandwiched disk $D$ in $G_Q$, and derive
a contradiction.
Suppose that $\partial D=a_1\cup a_2$, where $a_i$ is a subpath of
a Scharlemann cycle $C_i$.
Let $x_i$ be the label of $C_i$.
By Corollary \ref{cor3}, it follows that $|x_1-x_2|=p/2$.

Let $u,v$ denote the common vertices of $a_1,a_2$.
By Corollary \ref{cor1} each of $a_1,a_2$ has length greater than $1$.
Let $s_1,s_2$ be the vertices of $a_1,a_2$ respectively which are
adjacent to $u$,
and let $t_1,t_2$ be the vertices of $a_1,a_2$ respectively which are
adjacent to $v$. (Note that neither of the possibilities $s_1=t_1$, $s_2=t_2$
is excluded at this stage.)

By Corollary \ref{cor1} again, $s_1$ is connected to a vertex of $a_2$ other than $u,v$
by an edge contained in $D$. Similarly, $s_2$ is connected to a vertex of $a_1$ other than $u,v$
by an edge contained in $D$.  These edges cannot cross; hence $s_1$ and $s_2$ are
joined by an edge.  Similarly $t_1$ and $t_2$ are joined by an edge.  Hence each of $u,v$
is incident at a triangular region of $G_Q\cap D$: call them $\Delta_u$ and $\Delta_v$.

Suppose that the edges of $G_Q\cap D$ that are incident at $u$ have labels
$x_1+1,\dots,x_2$ at $u$, and suppose that $i$ of these edges
(namely those with labels $x_1+1,\dots,x_1+i$) are connected to $s_1$.
Then these edges have labels $x_1,x_1-1,\dots,x_1-i+1$ at $s_1$, and together they
bound $i-1$ $2$-gonal faces of $G_Q$, of which the $j$'th has corner labels
$g_{x_1+j}$ and $g_{x_1-j}$.

The remaining $(p-2i)/2$ edges of $G_Q\cap D$ incident at $u$ join $u$ to $s_2$.
They have labels
$x_1+i+1,\dots,x_2$ at $u$, and $x_1-i,\dots,x_2+1$ at $s_2$.  Together they
bound $(p-2i-2)/2$ $2$-gonal regions of $G_Q$, the $j$'th of which has corner
labels $g_{x_2-j}$ and $g_{x_2+j}$.  Thus the triangular region $\Delta_u$ of
$D\cap G_Q$ that is incident at $u$ has corner labels $g_y$ at $u$ and $g_z$
at each of $s_1$ and $s_2$, where $y=x_1+i$ and $z=x_1-i$ (modulo $p$).

We can now perform a similar analysis on the edges of $G_Q\cap D$ that are incident
at $v$.  Note, however, that for all $j\in\{1,\dots,(p-2)/2\}\smallsetminus\{i\}$
there is a $2$-gonal region of $G_Q\cap D$ with corner labels $g_{x_1-j}$ and $g_{x_1+j}$.
By Lemma \ref{bigon} there cannot be a $2$-gonal region of $G_Q\cap D$ with corner
labels $g_{x_1-i}$ and $g_{x_1+i}$.  It follows that there are also precisely $i$
edges joining $v$ to $t_1$, and $(p-2i)/2$ joining $v$ to $t_2$.  The triangular
region $\Delta_v$ of $D\cap G_Q$ that is incident at $v$ then has corner labels
$g_z$ at $v$ and $g_y$ at each of $t_1,t_2$, where $y=x_1+i$ and $z=x_1-i$
as above (see Figure 2).

\begin{center}
\scalebox{0.8}[0.6]{\includegraphics{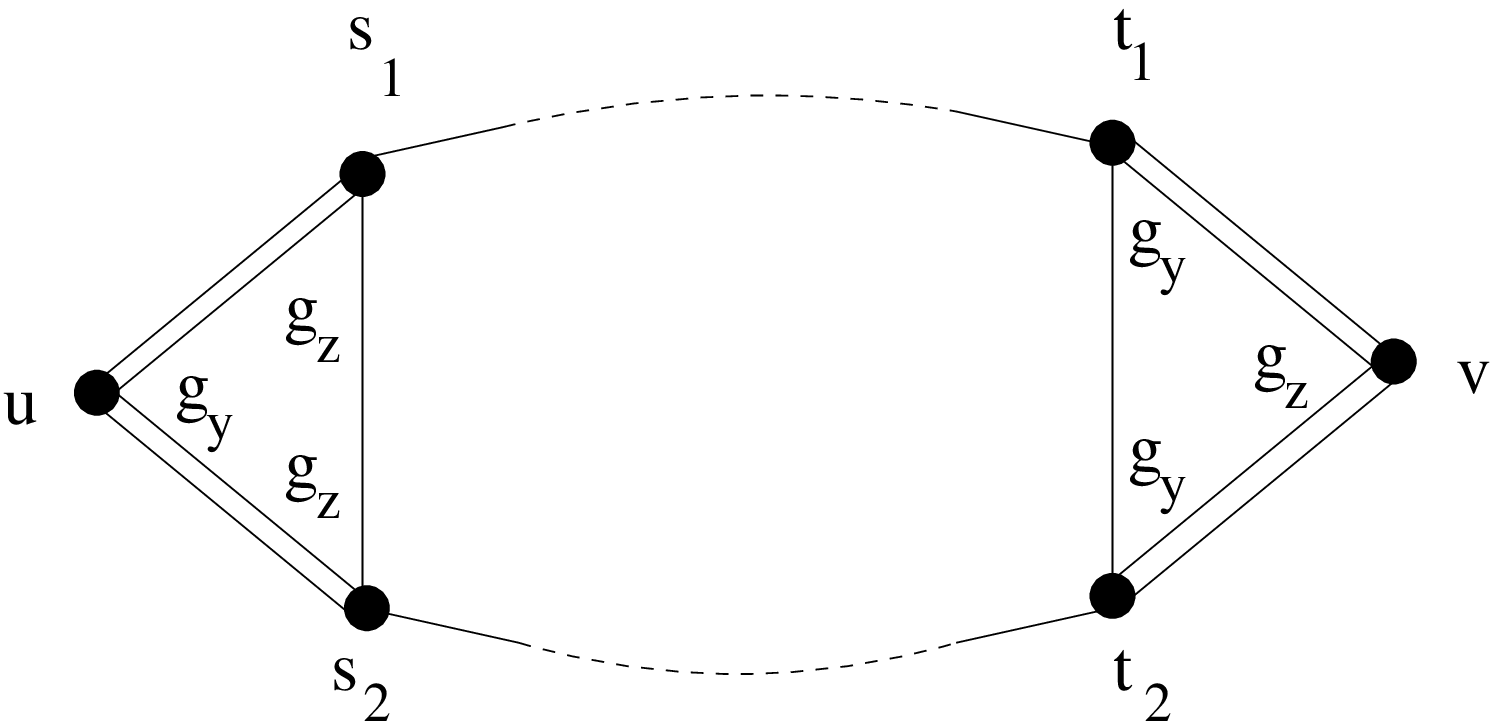}}
\\
Figure 2
\end{center}

Finally, let $K_0$ denote the (disconnected) subcomplex of $K$ with
vertices $\{0,1\}$, edges $\{g_{x_1},g_{x_2},g_y,g_z\}$
and $2$-cells $\{\Delta_1,\Delta_2,\Delta_u,\Delta_v\}$.

Then by Lemma \ref{pres}, $M$ has a connected summand with fundamental group
$$\pi_1(K_0,1)*\pi_1(K_0,2)\cong\<g_{x_1},g_{x_2},g_y,g_z|g_{x_1}^{\ell_1}=g_{x_2}^{\ell_2}=g_yg_z^2=g_zg_y^2=1\>
\cong \Z_{\ell_1}*\Z_{\ell_2}*\Z_3.$$

But this contradicts Corollary \ref{3lens}, which completes the proof.
\end{proof}

\begin{theorem}[= Theorem \ref{main}]\label{inequality}
Let $k$ be a knot in $S^3$ with bridge-number $b$.  Suppose that $r$ is a slope
on $k$ such that $M=M(k,r)=M_1\# M_2\# M_3$ where $M_1,M_2$ are
lens spaces and $M_3$ is a homology sphere but not a homotopy sphere.
Then $$|\pi_1(M_1)|+|\pi_1(M_2)|\le b+1.$$
\end{theorem}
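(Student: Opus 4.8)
The plan is to read off the inequality directly from the two principal results of Sections \ref{schar} and \ref{sd}, arguing by contradiction. Suppose that $|\pi_1(M_1)|+|\pi_1(M_2)|\ge b+2$; as before write $\ell_i=|\pi_1(M_i)|$, so that $\ell_1+\ell_2\ge b+2$. Note that the standing assumptions of Section \ref{graphs} --- namely that $M=M(k,r)$ splits as a connected sum of two lens spaces $M_1,M_2$ and a homology sphere $M_3$ that is not simply connected --- are precisely the hypotheses of the theorem (and, by \cite{GL}, $r$ is then an integer), so the entire apparatus of Sections \ref{graphs}, \ref{schar} and \ref{sd} is available.

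By construction the level surface $Q$ has an even number $q$ of boundary components, with $q\le 2b$ since $b$ is the bridge number of $k$; hence $(q+2)/2$ is an integer satisfying $(q+2)/2\le b+1$. Combining this with the contradiction hypothesis gives $\ell_1+\ell_2\ge b+2>b+1\ge(q+2)/2$, and in particular $|\pi_1(M_1)|+|\pi_1(M_2)|>(q+2)/2$. The hypothesis of Lemma \ref{sandw} is therefore satisfied, so there is a sandwiched disk $D\subset\widehat{Q}$. But Theorem \ref{nosand} says that $G_Q$ contains no sandwiched disk. This contradiction forces $\ell_1+\ell_2\le b+1$, which is the assertion of the theorem.

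I do not anticipate any real difficulty at this last stage: the entire weight of the argument has been carried in the earlier sections --- building the intersection graphs and the $2$-complex $K$, pinning Scharlemann-cycle lengths to the $\ell_i$ (Lemma \ref{samelength}), the weight-element obstruction (Lemma \ref{bigon}), producing a sandwiched disk by a counting argument when $\ell_1+\ell_2$ is large (Lemma \ref{sandw}), and excluding sandwiched disks by extracting the free factor $\Z_{\ell_1}*\Z_{\ell_2}*\Z_3$ and invoking Corollary \ref{3lens} (Theorem \ref{nosand}). All that is needed here is the elementary bound $q\le 2b$ on the number of boundary components of the level surface $Q$. The only point worth flagging is the translation $(q+2)/2\le b+1$, which is exactly what converts the ``$(q+2)/2$'' appearing in Lemma \ref{sandw} into the ``$b+1$'' of the statement; I would also remind the reader that $\ell_1$ and $\ell_2$ are coprime, so that the degenerate possibility $\ell_1=\ell_2$ is automatically excluded wherever the earlier lemmas used it.
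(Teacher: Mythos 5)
Your proof is correct and follows the paper's own argument essentially verbatim: assume $\ell_1+\ell_2>b+1$, use $q\le 2b$ to get $\ell_1+\ell_2>(q+2)/2$, invoke Lemma \ref{sandw} to produce a sandwiched disk, and contradict Theorem \ref{nosand}. No issues.
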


\begin{proof}
As discussed in \S \ref{graphs}, we put $k$ in thin position, and choose a level surface
$Q$ and disjoint planar surfaces $P_1,P_2$ such that
\begin{itemize}
 \item $P_i$ extends to a sphere in $M$ separating $M_i$ from $M_3$, and has
fewest boundary components among all such;
\item no component of $Q\cap P_i$ is a boundary-parallel arc in $Q$ or $P_i$.
\end{itemize}

By Gordon and Luecke \cite{GL}, there are Scharlemann cycles $C_i$ in $G_i$
for $i=1,2$.  Moreover, the Scharlemann cycle $C_i$ has length $\ell_i:=|\pi_1(M_i)|$
and bounds a disk-region $\Delta_i$ of $G_Q$.
If $\ell_1+\ell_2>b+1\ge (q+2)/2$, then
by Lemma \ref{sandw}
there is at least one sandwiched disk $D$ in $G_Q$.
But this contradicts Theorem \ref{nosand}. 

Hence $\ell_1+\ell_2\le b+1$ as claimed.
\end{proof}

\begin{corollary}[= Corollary \ref{maincor}]
With the hypotheses and notation of Theorem \ref{inequality}, we have
$$|r|=|\pi_1(M_1)|\cdot |\pi_1(M_2)| \le \frac{b(b+2)}{4}.$$
\end{corollary}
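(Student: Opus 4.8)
The plan is to derive the corollary from Theorem~\ref{inequality} by an elementary arithmetic argument. Write $\ell_i=|\pi_1(M_i)|$. First I would record the identity $|r|=\ell_1\ell_2$: since $r$ is an integer (by Gordon--Luecke) and $M$ is obtained by $r$-surgery on a knot in $S^3$, one has $H_1(M;\Z)\cong\Z/|r|\Z$; on the other hand $H_1(M;\Z)\cong\Z/\ell_1\Z\oplus\Z/\ell_2\Z\oplus H_1(M_3;\Z)$, and $H_1(M_3;\Z)=0$ while $\gcd(\ell_1,\ell_2)=1$ (because $\pi_1(M)$ has weight $1$), so this group is cyclic of order $\ell_1\ell_2$. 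Hence $|r|=\ell_1\ell_2$, as already remarked in the Introduction.

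Next I would apply Theorem~\ref{inequality} to get $\ell_1+\ell_2\le b+1$, and observe that the coprimality of $\ell_1$ and $\ell_2$, together with the fact that $M_1,M_2$ are non-trivial lens spaces (so $\ell_i\ge 2$), forces $\ell_1\ne\ell_2$. The arithmetic--geometric mean inequality, applied with this strict inequality, then gives
$$\ell_1\ell_2<\left(\frac{\ell_1+\ell_2}{2}\right)^2\le\frac{(b+1)^2}{4}.$$

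Finally I would close the gap between $(b+1)^2/4$ and $b(b+2)/4$ using integrality of $\ell_1\ell_2$. If $b$ is even, then $b(b+2)/4$ is an integer and $(b+1)^2/4=b(b+2)/4+\tfrac14$, so the strict bound forces $\ell_1\ell_2\le b(b+2)/4$. If $b$ is odd, then $(b+1)^2/4$ is itself an integer, so the strict bound gives $\ell_1\ell_2\le (b+1)^2/4-1=(b^2+2b-3)/4<b(b+2)/4$. Either way $|r|=\ell_1\ell_2\le b(b+2)/4$, as required. There is essentially no obstacle here, as all of the geometry is contained in Theorem~\ref{inequality}; the only point that requires care is that one must exploit $\ell_1\ne\ell_2$ (equivalently, coprimality) to sharpen the naive AM--GM estimate $(b+1)^2/4$ down to $b(b+2)/4$.
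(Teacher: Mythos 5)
Your proof is correct and takes essentially the same route as the paper: the paper also derives $|r|=\ell_1\ell_2$ from a two-way computation of $|H_1(M,\Z)|$ and then states that $\ell_1\ell_2\le b(b+2)/4$ ``follows easily'' from Theorem~\ref{inequality} together with the fact that $\ell_1,\ell_2$ are distinct positive integers; you have simply written out that elementary AM--GM-plus-integrality step in full, with a correct case split on the parity of $b$. (One small nitpick: distinctness of $\ell_1,\ell_2$ is a consequence of, not equivalent to, coprimality together with $\ell_i\ge 2$; but your argument only needs distinctness, so this does not affect correctness.)
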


\begin{proof}
Let $\ell_1=|\pi_1(M_1)|$ and $\ell_2=|\pi_1(M_2)|$.
The equation $|r|=\ell_1\cdot\ell_2$ comes from computing
$|H_1(M,\Z)|$ in two different ways.

Given that $\ell_1,\ell_2$ are distinct positive integers,
the inequality $\ell_1\cdot\ell_2\le b(b+2)/4$ follows easily
from Theorem \ref{inequality}.
\end{proof}

\end{document}